\newcommand{\eps}{\varepsilon}
\renewcommand{\le}{\leqslant}
\renewcommand{\ge}{\geqslant}
\renewcommand{\leq}{\leqslant}
\renewcommand{\geq}{\geqslant}
\newcommand{\N}{\mathbb N}
\newcommand{\R}{\mathbb{R}}
\renewcommand{\le}{\leqslant}
\renewcommand{\ge}{\geqslant}
\renewcommand{\phi}{\varphi}
\newcommand{\mk}{\mathcal}
\newtheorem{defn}{Definition}
\newtheorem{prob}{Problem}
\numberwithin{equation}{section}
\newtheorem{thm}{Theorem}[section]
\newtheorem{cor}[thm]{Corollary}
\newtheorem{prop}[thm]{Proposition}
\newtheorem{lem}[thm]{Lemma}
\begin{document}

\title{Two notions of unit distance graphs}
\author{
Noga Alon
\thanks{Sackler School of Mathematics
and Blavatnik School of Computer Science,
Tel Aviv University,
Tel Aviv 69978, Israel.
Email: {\tt nogaa@tau.ac.il}.
Research supported in part by an ERC Advanced
grant, by a USA-Israeli BSF grant,
by the Hermann
Minkowski Minerva Center for Geometry at Tel Aviv University
and by the Israeli I-Core program.}
\and
Andrey Kupavskii
\thanks{
Ecole Polytechnique F\'ed\'erale de Lausanne; Department of Discrete Mathematics, Moscow Institute of Physics and Technology.
Email: {\tt kupavskii@yandex.ru}.
Research supported in part by the grant  N MD-6277.2013.1 of President of RF and by the grant N 12-01-00683 of the Russian Foundation for Basic Research.}
}

\maketitle

\begin{abstract}
A {\em faithful (unit) distance graph} in $\mathbb{R}^d$
is a graph whose set of
vertices is a finite subset of the $d$-dimensional Euclidean space,
where two vertices are adjacent if and only if the Euclidean
distance between them is exactly $1$. A {\em (unit) distance graph} in
$\mathbb{R}^d$ is any subgraph of such a graph.

In the first part of the paper we focus on the differences
between these two classes of graphs. In particular, we show that for
any fixed $d$ the number of faithful distance graphs
in $\mathbb{R}^d$ on $n$
labelled vertices is $2^{(1+o(1)) d n \log_2 n}$, and give a short
proof of the known fact that the number of distance graphs in
$\mathbb{R}^d$ on $n$ labelled vertices is
$2^{(1-1/\lfloor d/2 \rfloor +o(1))n^2/2}$. We also study the behavior of
several Ramsey-type quantities involving these graphs.

In the second part of the paper we discuss the problem
of determining the minimum possible number of edges of a
graph which is not isomorphic
to a faithful distance graph in $\R^d$.
\end{abstract}

\section{Introduction}

\subsection{Background}

We study the differences between
the following two well-known notions of (unit) distance graphs:

\begin{defn}\label{dist1}
{\rm A graph $G=(V,E)$ is a} \textit{(unit) distance graph  in
$\mathbb{R}^d$,} {\rm if $V\subset \mathbb{R}^d$ and $E\subseteq\{(x,y):
x,y \in V, |x-y|=1\}$, where $|x-y|$ denotes the
Euclidean distance between $x$ and $y$.}
\end{defn}
\begin{defn}\label{dist2}
{\rm A graph $G=(V,E)$ is a} \textit{faithful (unit) distance graph
in $\mathbb{R}^d$,} {\rm if $V\subset \mathbb{R}^d$ and $E=\{(x,y): x,y \in
V, |x-y|=1\}.$}
\end{defn}
We say that a graph $G$ is realized as a (faithful) distance graph in $\R^d$, if it is isomorphic to some (faithful) distance graph in $\R^d$.
Denote by $\mathcal D(d)$ ($\mathcal D_n(d)$) the set of all labelled
distance graphs in $\R^d$ (of order $n$). Similarly, denote by $\mathcal{FD}(d)$ ($\mathcal{FD}_n(d)$)
the set of all labelled faithful distance graphs in
$\R^d$ (of order $n$).

Distance graphs appear in the investigation of two well-studied
problems.
The first is the problem of determining the chromatic number $\chi(\R^d)$
of the $d$-dimensional space:

\begin{equation*}
\chi(\mathbb{R}^d)=\min\{m\in\mathbb{N}:
\mathbb{R}^d = H_1\cup \ldots\cup H_m:
  \forall i, \forall x,y \in H_i,\ \ |x-y|\neq 1\}.\notag
\end{equation*}

\noindent The second is the investigation of the maximum
possible number $f_2(n)$ of
pairs of points at unit distance apart in a set of $n$ points
in the plane $\R^2$.
Distance graphs arise naturally in the context of both
problems. Indeed,
\begin{align*}
\chi(\R^d)&=\max_{G\in \mathcal D(d)}
\chi (G) = \max_{G\in\mathcal{FD}(d)}\chi( G),\\
f_2(n)&=\max_{G\in \mathcal D_n(2)} |E(G)| =
\max_{G\in\mathcal{FD}_n(2)} |E(G)|.
\end{align*}

\noindent Thus, in the study of these two extremal problems it does not
matter whether we consider distance graphs or faithful distance graphs.
However, there is a substantial difference between the sets $\mathcal D(d)$
and $\mathcal {FD}(d)$. This difference is discussed in the
theorems that appear in
what follows.

\subsection{The main results}
The first theorem provides some classes of graphs that
are (or are not) distance or faithful distance graphs in $\R^d$.
A surprising aspect of Theorem \ref{th1} is that for any $d$ there are
bipartite graphs that are not faithful distance graphs in $\R^d$.
\begin{thm}
\label{th1}

1. Any $d$-colorable graph can be realized as a distance graph in
$\R^{2d}$.

2. Let $d\in \N$ and $d\ge 4.$ Consider the graph $K'=K_{d,d}- H,$ where
$H$ is a matching of size $d-3.$ Then the graph $K'$ is not realizable
as a faithful distance graph in $\R^d$.

3. Any bipartite graph with maximum degree at most $d$ in one of its parts
so that no three vertices of degree $d$ in this part have exactly the same
set of neighbors
is realizable as a faithful distance graph in $\R^d$.
\end{thm}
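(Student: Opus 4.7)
\textit{Part 1.} Take a proper $d$-coloring $V=V_1\sqcup\cdots\sqcup V_d$ of $G$ and embed each class $V_i$ into coordinates $(2i-1,2i)$ of $\R^{2d}$, placing its vertices at distinct points on the circle of radius $1/\sqrt{2}$ in that coordinate plane and $0$ in all other coordinates. For $u\in V_i$ and $v\in V_j$ with $i\ne j$ the vectors $u,v$ sit in orthogonal $2$-planes with norm $1/\sqrt{2}$ each, so $|u-v|^2=1/2+1/2=1$. Every edge of $G$ runs between distinct color classes and is therefore realized at unit distance, which suffices for a distance graph.

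\textit{Part 2.} Suppose a faithful realization of $K'$ exists with parts $a_1,\dots,a_d$, $b_1,\dots,b_d$, and $a_ib_i\notin E$ for $i\le d-3$. One first verifies that $a_{d-2},a_{d-1},a_d$ are affinely independent, since three distinct collinear points cannot all be at distance $1$ from a common point. Let $P$ be the plane they span and $c\in P$ its circumcenter. Every $b_j$ is at distance $1$ from these three $a_k$, so $b_j$ lies in the $(d-2)$-dimensional affine subspace $L:=c+P^{\perp}$ at a common distance $r$ from $c$; since the $b_j$ are distinct, $r>0$, and the $b_j$ lie on the sphere $S\subset L$ of center $c$ and radius $r$. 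Now for each $i\le d-3$, the orthogonal projection $c_i$ of $a_i$ onto $L$ is equidistant (in $L$) from the $d-1$ points $\{b_j:j\ne i\}$ by Pythagoras. If these $d-1$ points affinely span $L$, their circumcenter in $L$ is unique and must equal $c$ (which is equidistant from every point of $S$); then Pythagoras gives $|a_i-b_i|^2=|a_i-c|^2+r^2=(1-r^2)+r^2=1$, contradicting $a_ib_i\notin E$.

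Hence for every $i\le d-3$, $b_i$ lies outside the affine span of the remaining $b_j$, which forces every affine dependency of $\{b_1,\dots,b_d\}$ to be supported on $\{b_{d-2},b_{d-1},b_d\}$. Writing $\mu:=\dim\mathrm{aff}\{b_1,\dots,b_d\}\le d-2$, the space of affine dependencies has dimension $d-\mu-1\ge 1$, so the support restriction forces $b_{d-2},b_{d-1},b_d$ to be collinear (and $\mu=d-2$). But three distinct collinear points cannot lie on a sphere of positive radius --- a contradiction.

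\textit{Part 3.} This is the most delicate part. The plan is to place $B$ as generic points of a $(d-1)$-sphere $\Sigma\subset\R^d$ of radius $\rho<1$, chosen so that: (i) for every $d$-subset $N\subseteq B$ occurring as $N(a)$ for a degree-$d$ vertex $a\in A$, the intersection $\bigcap_{b\in N}U_b$ of unit spheres consists of exactly two distinct points, neither lying on $U_{b'}$ for any $b'\in B\setminus N$; and (ii) for every $N\subseteq B$ with $k:=|N|<d$ arising as some $N(a)$, the intersection $\bigcap_{b\in N}U_b$ is a $(d-k)$-sphere not contained in any $U_{b'}$, $b'\in B\setminus N$. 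Each failure of (i) or (ii) is a nontrivial polynomial condition on the coordinates of $B$ on $\Sigma$, so the set of admissible $B$ is Zariski-open and nonempty. Having fixed such $B$, one assigns positions to the vertices of $A$ sequentially: a vertex with $|N(a)|<d$ is placed on the positive-dimensional sphere $\bigcap_{b\in N(a)}U_b$ at a point avoiding every $U_{b'}$ (for $b'\notin N(a)$) and every previously placed vertex, which is possible since the excluded set is a finite union of lower-dimensional subvarieties; a vertex with $|N(a)|=d$ is placed at one of the two points from (i), and the hypothesis that at most two vertices of $A$ share any degree-$d$ neighborhood is precisely what permits this to define an injective assignment. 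The main obstacle is arranging the simultaneous genericity of (i) and (ii) for a single $B$, which requires writing each undesired coincidence as an explicit polynomial on the parameter space of $B\subset\Sigma$ and checking that none of these polynomials vanishes identically.
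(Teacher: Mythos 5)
Parts 1 and 3 of your plan follow the paper's approach (pairwise orthogonal circles of radius $1/\sqrt{2}$; a generic placement of $B$ followed by placing $A$ on complementary spheres) and are essentially sound, with one omission in Part 3: you must also rule out spurious unit distances \emph{between} two degree-$d$ vertices of $A$ with different neighbourhoods. Their positions are forced up to a binary choice, so this cannot be absorbed into the sequential placement of $A$; it has to be one more genericity (polynomial non-vanishing) condition imposed on $B$ itself, which is exactly condition (c) in the paper's proof.

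The genuine gap is in Part 2. Your circumcenter computation proves: if the $d-1$ points $\{b_j : j\ne i\}$ affinely span $L$, then $|a_i-b_i|=1$. Its contrapositive yields only that $\{b_j : j\ne i\}$ fail to span the $(d-2)$-dimensional space $L$, i.e.\ that these $d-1$ points are affinely \emph{dependent}. The sentence ``Hence $b_i$ lies outside the affine span of the remaining $b_j$'' asserts something different (essentially the opposite: that no affine dependency involves $b_i$), and it does not follow. The weaker, correct conclusion is not enough to finish: a priori all $d$ points could lie on a common proper subsphere (say a circle) of $S$, in which case no $d-1$ of them span $L$, every $b_i$ lies in the affine span of the others, and no contradiction with ``distinct points on a sphere of positive radius'' arises; the support-restriction paragraph then collapses. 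The repair is the paper's argument: the set of points of $S$ at unit distance from $a_i$ is a proper subsphere $T_i=S\cap H_i$ for some hyperplane $H_i$ of $L$, and it contains all $b_j$ with $j\ne i$; hence $\mathrm{aff}\{b_j : j\ne i\}\subseteq H_i$ and $S\cap\mathrm{aff}\{b_j : j\ne i\}\subseteq T_i$, so $b_i\in\mathrm{aff}\{b_j : j\ne i\}$ would force the forbidden edge $(a_i,b_i)$. This yields the strong non-degeneracy claim you actually need; the paper runs it sequentially to produce $d$ affinely independent points inside a $(d-2)$-dimensional affine subspace, a contradiction.
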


The next theorem shows that in any dimension $d$
there are far more distance graphs than
faithful distance graphs.

\begin{thm}
\label{th2}
1. For any $n, d\in \N, n\ge 2d,$
we have $|\mathcal {FD}_n(d)|
\leq {{n(n-1)} \choose {nd}}.$
Therefore, for any fixed $d$, $|\mathcal {FD}_n(d)|=
2^{(1+o(1)) dn \log_2 n}$.

2. (A. Kupavskii, A. Raigorodskii, M. Titova, \cite{KRT}). For any fixed $d \in \N$ we have $|\mathcal D_n(d)|=2^{\left(1-\frac 1{[d/2]}
+ o(1)\right)\frac {n^2}2}.$

\end{thm}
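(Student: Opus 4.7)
Since Part~2 is attributed to \cite{KRT}, my plan concerns only Part~1, which splits into the explicit inequality $|\mathcal{FD}_n(d)|\leq\binom{n(n-1)}{nd}$ and its asymptotic consequence.

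The plan for the inequality is to construct an injection $\Phi$ from $\mathcal{FD}_n(d)$ into the family of $nd$-element subsets of the $n(n-1)$ ordered pairs of distinct elements of $[n]$. For $G\in\mathcal{FD}_n(d)$, fix a realization $p_1,\ldots,p_n\in\R^d$, perturbed slightly if necessary so that any $d+1$ of the points are affinely independent (the perturbation is chosen to preserve the graph). The key geometric fact is that a point in $\R^d$ is determined up to reflection across the affine hull of $d$ affinely independent reference points by its distances to them, and is determined uniquely by its distances to $d+1$ such points. Using this, to each vertex $v$ one canonically attaches a list $\sigma_1(v),\ldots,\sigma_d(v)$ of $d$ other vertices that rigidify $p_v$, and sets $\Phi(G):=\{(v,\sigma_k(v)):v\in[n],\ k\in[d]\}$, a set of exactly $nd$ ordered pairs. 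Injectivity is established by reconstructing $G$ from $\Phi(G)$: a distinguished reference $(d+1)$-simplex of $G$ (identifiable from the encoding by a combinatorial rule such as the pattern of second coordinates) is placed in a canonical position, and then each remaining vertex's position is recovered from the $d$ ordered pairs that name it, with the two-fold reflection ambiguity broken by a fixed convention; the reconstructed realization then determines $G$.

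The asymptotic upper bound $|\mathcal{FD}_n(d)|\leq 2^{(1+o(1))dn\log_2 n}$ is Stirling applied to $\binom{n(n-1)}{nd}$. For the matching lower bound I would invoke Theorem~\ref{th1}, part~3, which says that every bipartite graph whose one part has maximum degree at most $d$ and no three vertices of that part sharing a neighborhood is realizable as a faithful distance graph in $\R^d$. Split $[n]$ into parts $A$ and $B$ with $|B|=s=s(n)\to\infty$ satisfying $\log s\sim\log n$, and let each vertex of $A$ choose an arbitrary $d$-subset of $B$ as its neighborhood; this produces $\binom{s}{d}^{n-s}=2^{(1-o(1))dn\log_2 n}$ labelled bipartite graphs. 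A standard counting argument shows that when $\binom{s}{d}=\omega(n)$ almost all of them satisfy the ``no three identical neighborhoods'' condition, yielding the matching lower bound.

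The main technical obstacle is the injectivity of $\Phi$. The selection rule for the $\sigma_k(v)$ must depend only on $G$ (and not on the specific realization chosen), and the ordered pairs alone must pin down a realization without any need to record distances or reflection bits. The most natural remedy is to insist that each pair $(v,\sigma_k(v))$ is an edge of $G$ and that the chosen reference simplex spans a unit-distance clique of $G$, in which case all distances in play are known to equal $1$; handling those $G$ for which this is not possible requires a more delicate inductive selection scheme, which is the heart of the argument.
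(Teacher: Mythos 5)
Your lower bound is essentially the paper's: take $|B|=n/\log n$, let each vertex of $A$ pick a $d$-subset of $B$ as its neighborhood, and invoke part 3 of Theorem \ref{th1}. (The paper simply requires all neighborhoods to be distinct and counts injections, $|A|!\binom{\binom{|B|}{d}}{|A|}$, which avoids your probabilistic step; note that your condition $\binom{s}{d}=\omega(n)$ does not make the expected number of coinciding triples $o(1)$ when $d=1$, though this is easily repaired.) That half is fine.

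The upper bound is where there is a genuine gap, and it is not a small one: the ``more delicate inductive selection scheme'' you defer is exactly the content of the claim. Concretely: (i) the decoder receives only a set of $nd$ ordered pairs, with no distances and no reflection data, so unless every recorded pair $(v,\sigma_k(v))$ is an edge the position of $v$ cannot be recovered --- but a faithful distance graph may have vertices of degree less than $d$, indeed isolated ones; the empty graph and the single-edge graph both lie in $\mathcal{FD}_n(d)$ and would have to receive different codes of the same size $nd$; (ii) a distinguished reference simplex spanning a unit-distance clique need not exist, since $G$ can be triangle-free; (iii) even when all reference pairs are edges, the two-fold reflection ambiguity per vertex must be broken by a convention the decoder can apply knowing only the pairs, and no such convention is available; (iv) the choice of $\sigma_k(v)$ must depend on $G$ alone, while different realizations of the same $G$ need not admit a common rigidifying structure. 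Rigidity-type encodings can sometimes be pushed through at the cost of extra per-vertex data (a reflection bit, a choice among boundedly many candidate positions), which would still yield $2^{(1+o(1))dn\log_2 n}$ for fixed $d$ but not the stated inequality $\binom{n(n-1)}{nd}$. The paper's route avoids all of this: it applies the zero-pattern theorem of R\'onyai, Babai and Ganapathy (Proposition \ref{prop1}) to the $m=n(n-1)/2$ quadratics $P_{ij}=-1+\sum_{r=1}^d(v^i_r-v^j_r)^2$ in $\ell=dn$ real variables. A labelled faithful distance graph is determined by the zero pattern of these polynomials at the point of $\R^{dn}$ encoding its realization, and with $k=2$ the bound $\binom{km-(k-2)\ell}{\ell}=\binom{n(n-1)}{nd}$ is exactly the claimed inequality. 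You should replace the injection by this argument (or prove a zero-pattern bound of your own); the combinatorial encoding cannot be completed along the lines you sketch.
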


By simple calculations one can obtain the
following corollary from the upper bound in part 1 of Theorem \ref{th2}:

\begin{cor}\label{cord}
1. If $d=d(n)=o(n),$ then we have $|\mathcal {FD}_n(d)|=2^{
o(n^2)}.$

2. If $d = d(n)\le cn,$ where $0<c<1/2$ and $H(c)
<1/2$ (here $H(z)=-z \log_2 z -(1-z) \log_2 (1-z) $ is the binary
entropy function), then there exists a constant
$c'=c'(c)<1/2$ such that $|\mathcal {FD}_n(d)|\le 2^{c'n^2(1+o(1))}.$
\end{cor}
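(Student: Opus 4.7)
The plan is to combine the upper bound
\[
|\mathcal{FD}_n(d)| \leq \binom{n(n-1)}{nd}
\]
from Theorem \ref{th2}, part 1, with the standard entropy inequality $\binom{N}{K} \leq 2^{N H(K/N)}$, which holds for all $0 \leq K \leq N$. Applied with $N = n(n-1)$ and $K = nd$, this produces the master estimate
\[
|\mathcal{FD}_n(d)| \;\leq\; 2^{\,n(n-1)\, H\!\left(\frac{d}{n-1}\right)},
\]
valid as soon as $n \geq 2d$, a condition that is automatic under the hypotheses of either part for $n$ large enough.

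For part 1, the assumption $d = o(n)$ forces $d/(n-1) \to 0$, so by continuity of $H$ at zero (with $H(0) = 0$) we have $H(d/(n-1)) = o(1)$. The exponent is then $n(n-1)\cdot o(1) = o(n^2)$, giving $|\mathcal{FD}_n(d)| = 2^{o(n^2)}$ as claimed.

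For part 2, write
\[
\frac{d}{n-1} \;=\; \frac{d}{n}\cdot\frac{n}{n-1} \;\leq\; c\,(1+o(1)).
\]
Since $c < 1/2$, this ratio lies in $[0,1/2]$ for all sufficiently large $n$, an interval on which $H$ is strictly increasing and continuous; hence $H(d/(n-1)) \leq H(c) + o(1) = H(c)(1+o(1))$, where we used $H(c) > 0$. Substituting into the master estimate yields $|\mathcal{FD}_n(d)| \leq 2^{H(c)\,n^2(1+o(1))}$, and the choice $c' = H(c)$ has $c' < 1/2$ by the standing assumption $H(c) < 1/2$.

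There is no real obstacle here; the computation is essentially routine. The only technical care needed is to verify that $d/(n-1)$ remains in the monotone regime $[0,1/2]$ of $H$ (which is ensured by $c < 1/2$) and to absorb into the $(1+o(1))$ factor of the exponent both the error from $n/(n-1) = 1+o(1)$ and the error from continuity of $H$ in passing from $c(1+o(1))$ back to $c$.
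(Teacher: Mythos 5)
Your proof is correct and follows exactly the route the paper intends: the paper derives this corollary ``by simple calculations'' from the bound $|\mathcal {FD}_n(d)|\le \binom{n(n-1)}{nd}$ of Theorem \ref{th2}, part 1, and your use of the entropy estimate $\binom{N}{K}\le 2^{NH(K/N)}$ together with monotonicity and continuity of $H$ on $[0,1/2]$ is the standard way to carry out those calculations.
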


As  proved by B. Bollob\'as \cite{Bol}, with high
probability the random graph $G(n,1/2)$ has chromatic number $(1+o(1))\frac
n{2\log_2 n}$. By part 1 of Theorem \ref{th1}, any $k$-colorable graph is realizable as a distance graph in $\R^{2k}$.
It means that if $d = d(n)\ge c\frac n{\log_2 n}, $ where $c>1$, then $|\mathcal
D_n(d)|=(1+o(1)) 2^{\frac {n(n-1)}2}.$
In other words, for such $d$ almost every
graph on $n$ vertices can be realized as a distance graph in $\R^d$.
This is very different from the behaviour of faithful distance
graphs, as shown in Corollary \ref{cord}.\\
\vspace{0.3cm}

We next consider the following extremal problem.

\begin{prob} Determine
the minimum possible number $g(d)$ of edges of a graph $G$ which is not
realizable as a faithful distance graph in $\R^d$.
\end{prob}

An intriguing question here is whether or not for any $d\ge 4$, $~g(d)= {d+2\choose 2}$.
In other words, does $K_{d+2}$ have the minimal number of edges
among the graphs that are not realizable as faithful distance graphs
in $\R^d,\ d\ge 4$. Interestingly, this is not the case
in $\R^3$, since the graph
$K_{3,3}$ is not realizable as a faithful distance graph
in $\R^3$ and it has fewer edges than $K_5$.

We restrict our attention here to bipartite graphs, studying the
following problem.

\begin{prob}\label{pr1}
Determine the minimum possible number
$g_2(d)$ of edges of a bipartite graph $K$ which is
not realizable as a faithful distance graph in $\R^d$.
\end{prob}

Note that the minimum number of vertices
such a $K$ can have equals $2d$, as
follows from parts 2 and 3 of Theorem \ref{th1}.

\begin{thm}\label{bipart} For any $d\ge 4$
we have  ${d+2\choose 2}\le g_2(d)\le {d+3\choose 2}-6.$
\end{thm}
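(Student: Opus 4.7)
The plan is to prove the two bounds separately.

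For the lower bound $g_2(d)\ge\binom{d+2}{2}$, I would show by induction on the number of edges that every bipartite graph $G$ with $|E(G)|<\binom{d+2}{2}$ admits a faithful distance realization in $\R^d$. The key structural input is that such a $G$ must contain a vertex of degree at most $d-1$: if every vertex had degree $\ge d$, then both parts would contain at least $d$ vertices and hence $|V(G)|\ge 2d$, giving $|E(G)|\ge d\cdot |V(G)|/2\ge d^{2}$; but $d^{2}>\binom{d+2}{2}$ for every $d\ge 4$, contradicting the edge bound. Pick such a vertex $v$ of degree $k\le d-1$, let $G'=G-v$, and realize $G'$ by the inductive hypothesis, strengthened so that the realization is \emph{generic} (its coordinates are algebraically independent modulo the constraints forced by the unit edges; in particular the $k$ neighbours of $v$ are affinely independent). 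The locus $S\subset\R^{d}$ of points at unit distance from all of $v$'s neighbours in $G'$ is then a sphere of positive dimension $d-k\ge 1$. Each non-neighbour of $v$ in $G'$ imposes a forbidden unit sphere, and each already-placed vertex is a single forbidden point; each of these carves only a subvariety of dimension $\le d-k-1$ out of $S$, so a valid position for $v$ exists and completes the faithful realization of $G$.

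For the upper bound $g_2(d)\le\binom{d+3}{2}-6$, I would exhibit an explicit bipartite graph $H_d$ with exactly $\binom{d+3}{2}-6$ edges that is not realizable as a faithful distance graph in $\R^d$. The case $d=4$ is already handled by part 2 of Theorem~\ref{th1}: the graph $K_{4,4}$ minus a matching of size $d-3=1$ has $16-1=15=\binom{7}{2}-6$ edges. For $d\ge 5$ the target $\binom{d+3}{2}-6$ is smaller than the edge count $d^{2}-d+3$ of the Theorem~\ref{th1} example by exactly $\binom{d-3}{2}$, so the construction must shave off that many additional edges from a bipartite graph closely related to $K_{d,d}$ minus a matching without destroying the rigidity obstruction. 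The non-realizability of $H_d$ would be proved along the lines of the proof of part 2 of Theorem~\ref{th1}: assume a faithful realization; use the sphere intersections forced by the retained edges to confine each part to a very low-dimensional affine subspace; then a dimension count shows that some pair of vertices on the same part must coincide, or that some supposedly removed edge is forced to have unit length, contradicting faithfulness.

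The main obstacle is identifying the right graph $H_d$ for $d\ge 5$ and verifying its non-realizability: removing $\binom{d-3}{2}$ edges beyond the obvious $K_{d,d}$-minus-matching example requires choosing the removed edges carefully so that the underlying configuration remains over-determined by its unit-distance constraints. The lower bound, by contrast, is essentially routine once the inductive hypothesis is correctly strengthened to ensure the affine independence that makes the sphere $S$ positive-dimensional, and once one checks that a generic realization can indeed be produced in each inductive step.
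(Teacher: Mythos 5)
Both halves of your proposal contain genuine gaps.

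\textbf{Upper bound.} You only have a construction for $d=4$; for $d\ge 5$ you explicitly leave open ``identifying the right graph $H_d$ and verifying its non-realizability,'' which is the entire content of this half of the theorem. The paper's solution is to observe that the non-realizability proof of part 2 of Theorem \ref{th1} never uses the edges $(a_i,b_j)$ with $i,j>3$, $i<j$; hence the graph $K''$ with parts $A=\{a_1,\dots,a_d\}$, $B=\{b_1,\dots,b_d\}$ and edge set $E=\{(a_i,b_j):i>j\}\cup\{(a_i,b_j):i\le 3\}$, which has exactly ${d+3\choose 2}-6$ edges, is already not faithfully realizable in $\R^d$ --- no new argument is needed. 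Your arithmetic about removing ${d-3\choose 2}$ further edges beyond the matching is consistent with this, but a guess at the edge count is not a construction.

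\textbf{Lower bound.} The step that fails is the strengthening of the inductive hypothesis to a ``generic'' realization in which, in particular, the $k\le d-1$ neighbours of the removed vertex $v$ are affinely independent. Genericity ``modulo the constraints forced by the unit edges'' does not give you this: the constraints themselves can force affine dependence. For instance, if three vertices of $A$ are adjacent to all of $B$, then (as in the proof of part 2 of Theorem \ref{th1}) those three vertices must be affinely independent and all of $B$ is confined to the complementary sphere of their circumcircle, a sphere of dimension $d-3$; any $d-1$ vertices of $B$ are then forced to be affinely dependent whenever $|B|$ is small relative to $d-1$... more simply, in any faithful realization of $K_{4,4}$ in $\R^5$ at least one of the two parts must lie on a circle and is therefore affinely dependent as a $4$-point set. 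So when you re-insert $v$, its neighbourhood may be a forced-dependent set, and the locus of admissible positions for $v$ is then either higher-dimensional than you claim or, worse, \emph{empty} (affinely dependent points that are not concyclic at radius $<1$ have no common unit-distance point). Deciding whether fewer than ${d+2\choose 2}$ edges can ever force such a fatal dependence is precisely the quantitative heart of the theorem; the paper handles it by ordering the vertices of $A$ by degree, recording each neighbourhood as an affine-independence condition on $B$, and proving (Lemma \ref{lemedge}) that forcing $B$ onto a sphere of dimension $k$ costs at least ${k+3\choose 2}-3$ edges, followed by a case analysis on the number of full-degree vertices. Your induction has no mechanism that plays the role of this counting lemma, so the lower bound is not established.
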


\vspace{0.3cm}

\noindent
\textbf{Remark.}

After completing this manuscript we learned that some of  the
questions discussed here have already been studied by
Erd\H os and Simonovits in \cite{ErdSim} and by  Maehara in
\cite{Mae}.
It seems that
Maehara was unaware of the paper \cite{ErdSim}. We proceed with a
brief comparison between the
the results of these two papers and our results here.
Part 3 of Theorem \ref{th1} slightly improves the bipartite case of Theorem
2 from \cite{Mae}, which states the following: if a graph $G$ has maximum degree $d$ and $\chi(G)=k$, then $G$ is faithful distance in $\R^D$, where $D =  {k\choose 2}(d+1)$. In part 2 of Theorem  \ref{th1} we present a
graph which is not realizable as a faithful distance
graph in $\R^d$. Constructions of such graphs can be found in both
papers \cite{ErdSim} and \cite{Mae}. The construction of Erd\H os
and Simonovits (given in Proposition 1 of \cite{ErdSim}) is similar to the
construction we use, however, it is slightly worse in terms of the
number of
vertices and edges (the smallest known construction, which is used in the
proof of the upper bound in Theorem \ref{bipart}, is a bipartite graph with parts $A=\{a_1,\ldots,
a_d\}, B=\{b_1,\ldots,b_d\}$ and the set of edges $E=\{(a_i,b_j):
i>j\}\cup\{(a_i,b_j):i\le 3\}$).
The graph used by Maehara is much
bigger than both graphs used by us and by Erd\H os and Simonovits. Note
that our graph is in some sense best possible, as follows from
the assertion of
part 3 of Theorem \ref{th1}.

The main results of both
papers \cite{Mae} and \cite{RM} establish bounds on the dimension in
which a graph can be realized as a faithful distance graph in terms of
the maximum degree and the chromatic number (see Theorem \ref{thRM} in
the present paper).
Similar bounds were already proved by Erd\H os and
Simonovits (see Theorem 6 in \cite{ErdSim}), and their bound differs from
the bound of R\" odl and Maehara only by 1 (Erd\H os and Simonovits prove
that any graph with maximum degree $k$ can be realized as a faithful
distance graph in $\R^{2k+1}$, while R\" odl and Maehara prove that
such graphs can be realized in  $\R^{2k}$, the proofs rely on
similar constructions).

A question analogous to Problem \ref{pr1}, for distance graphs
instead of faithful distance graphs, was asked in \cite{ErdSim} (problem
5 in \cite{ErdSim}).

\subsection{More on the difference
between distance and faithful distance graphs}

We study two Ramsey-type quantities related to
distance and faithful distance graphs.
\begin{defn}\label{rams1}
The \textit{(faithful) distance Ramsey number} $ R_{D}(s,t,d) $
$\bigl(R_{FD}(s,t,d)\bigr)$ is the minimum integer
$m$ such that for any graph $G$ on $m$ vertices the following holds:
either $G$ contains an induced $s$-vertex subgraph isomorphic to a (faithful)
distance graph in $ {\mathbb{R}}^d $ or its complement $ \bar {G} $
contains an induced $t$-vertex subgraph isomorphic to a (faithful) distance graph
in ${\mathbb{R}}^d.$
\end{defn}

The quantity $R_D(s,s,d)$ is studied in \cite{KRT}, where the
following theorem is proved:
\begin{thm}[A. Kupavskii, A. Raigorodskii, M. Titova \cite{KRT}]
\label{thKRT}
1. For every fixed $d\in \mathbb N$ greater than 2 we have

$$
R_{D}(s,s,d) \geq 2^{\left(\frac{1}{2[d/2]}+ o(1)\right)s}.
$$

2. For any  $d=d(s)$, where $ 2\le d \le s/2,$ we have

$$
R_{D}(s,s,d) \leq  d \cdot R\left(\left\lceil\frac{s}{[d/2]}
\right\rceil,\left\lceil\frac{s}{[d/2]}\right\rceil\right),
$$
where $R(k,\ell)$ is the classical Ramsey number, which is the minimum
number $n$ so that any graph on $n$ vertices
contains either a clique of size $k$ or an independent set of size
$\ell$.
\end{thm}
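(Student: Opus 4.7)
The plan is to prove the two parts separately, using different techniques: the lower bound follows from a probabilistic argument that invokes the counting estimate of Theorem~\ref{th2}(2), while the upper bound is an iterated Ramsey construction that uses Theorem~\ref{th1}(1) to certify the resulting structure as a distance graph in $\R^d$.

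For the lower bound, I would take $G = G(n,1/2)$, the uniform random graph on $n$ labelled vertices. For each fixed $s$-subset $S$, the probability that $G[S]$ is isomorphic to a distance graph in $\R^d$ equals $|\mathcal D_s(d)|/2^{\binom{s}{2}}$, which by Theorem~\ref{th2}(2) is $2^{-(1/[d/2]+o(1))s^2/2}$. A union bound over the $\binom{n}{s}\le n^s$ choices of $S$ and over both $G$ and $\bar G$ (identically distributed as $G(n,1/2)$) shows that the probability of an $s$-vertex induced distance subgraph appearing in $G$ or $\bar G$ tends to $0$ whenever $n = 2^{(1/(2[d/2])-o(1))s}$. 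A graph $G$ certifying $R_D(s,s,d) > n$ then exists, yielding the stated lower bound.

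For the upper bound, set $k = \lceil s/[d/2] \rceil$ and $m = d\cdot R(k,k)$, and let $G$ be any graph on $m$ vertices. I would iteratively apply Ramsey's theorem: at step $i = 1,\ldots,d$, the remaining set of $m - (i-1)k$ vertices has size at least $R(k,k)$ (this reduces to $R(k,k) \ge k$), so one can extract a subset $C_i$ of $k$ vertices forming a clique in either $G$ or $\bar G$, and remove it. This yields $d$ disjoint $k$-sets, each labelled ``$G$-clique'' or ``$\bar G$-clique''; by pigeonhole, $\lceil d/2 \rceil \ge [d/2]$ of them share a label, say they are all $G$-cliques (otherwise interchange $G$ and $\bar G$). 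Let $V'$ be the union of these $[d/2]$ selected cliques; then $|V'| = [d/2]\cdot k \ge s$, and $\bar G[V']$ is $[d/2]$-partite with these cliques as colour classes (since a $G$-clique is an independent set in $\bar G$). By Theorem~\ref{th1}(1), any $[d/2]$-colourable graph is realizable as a distance graph in $\R^{2[d/2]}$, and hence in $\R^d$ since $2[d/2]\le d$. The required $s$-vertex induced distance subgraph in $\bar G$ is then obtained by taking any $s$ vertices inside $V'$.

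The only conceptual step is recognising that a disjoint union of cliques has a $[d/2]$-partite complement, on which Theorem~\ref{th1}(1) applies; the remainder of the upper-bound proof is bookkeeping, namely checking that $d$ Ramsey iterations fit into $d\cdot R(k,k)$ vertices and that $\lceil d/2 \rceil \ge [d/2]$ for the pigeonhole step. For the lower bound, the delicate point is simply matching the Turán-type exponent $1-1/[d/2]$ from Theorem~\ref{th2}(2) against the $\binom{s}{2}$ normalisation to produce the precise constant $1/(2[d/2])$ in the exponent.
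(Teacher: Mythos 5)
Your proposal is correct. Note that the paper itself does not prove Theorem \ref{thKRT}: it is imported from \cite{KRT} as a known result, so there is no in-paper proof to compare against line by line. That said, both of your arguments coincide exactly with the machinery the paper uses elsewhere for the analogous statements. Your lower bound is the same first-moment computation the paper carries out for Proposition \ref{thram} (there with $|\mathcal{FD}_s(d)|$ in place of $|\mathcal D_s(d)|$): the condition $\binom{m}{s}2^{1-\binom{s}{2}}|\mathcal D_s(d)|<1$ combined with Theorem \ref{th2}(2) gives precisely the exponent $\frac{1}{2[d/2]}$. Your upper bound --- iterated Ramsey extraction of $d$ disjoint homogeneous $k$-sets, pigeonhole to get $[d/2]$ of the same type, and then Theorem \ref{th1}(1) applied to the $[d/2]$-partite complement --- is the same iterated-Ramsey-plus-embedding idea the paper invokes in Section 3 when partitioning a graph into cliques and independent sets, and it is consistent with the paper's remark that the bound can in fact be sharpened to $R\bigl(\lceil s/[d/2]\rceil,\lceil s/[d/2]\rceil\bigr)+2s$ (your version extracts each homogeneous set from a fresh budget of $R(k,k)$ vertices, which is slightly wasteful but certainly suffices for the stated $d\cdot R(k,k)$ bound). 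All the bookkeeping you flag --- $m-(i-1)k\ge R(k,k)$, $[d/2]\cdot\lceil s/[d/2]\rceil\ge s$, and $2[d/2]\le d$ --- checks out.
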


Note that it is in fact not difficult to improve the upper bound to
$$
R\left(\left\lceil\frac{s}{[d/2]}
\right\rceil,\left\lceil\frac{s}{[d/2]}\right\rceil\right) +2s,
$$
but for our purpose here this improvement is not essential and we
thus  do not include its proof.

By the previous theorem the bounds for $R_{D}(s,s,d)$
are roughly the same as for the classical Ramsey number
$R\left(\left\lceil\frac{s}{[d/2]}
\right\rceil,\left\lceil\frac{s}{[d/2]}\right\rceil\right)$:

\begin{equation*}
\frac {s}{2[d/2]}(1+o(1))\le
\log R_{D}(s,s,d) \le \frac {2s}{[d/2]}(1+ o(1)),
\end{equation*}
where the $o(1)$-terms tend to zero as $s$ tends to infinity.

What can we say about $R_{FD}(s,s,d)$?
It turns out that $R_{FD}(s,s,d)$ is far larger  than
$R_D(s,s,d)$. Using Theorem \ref{th2} we can prove the following
result:

\begin{prop}\label{thram}
1. For any $d=o(s)$ we have $R_{FD}(s,s,d)\ge 2^{(1+
o(1))s/2}$.

2. For $d\le cs,$ where $c<1/2$ and $H(c)<1/2,$
there exists a constant $\alpha=\alpha(c)>0$ such that
$R_{FD}(s,s,d)\ge 2^{(1+ o(1))\alpha s}.$
\end{prop}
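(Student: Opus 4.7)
The plan is to apply the standard probabilistic (first moment) Ramsey argument to the random graph $G(n,1/2)$, and feed in the counting bounds for faithful distance graphs supplied by Corollary \ref{cord}.

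For a fixed $s$-element vertex subset $V$, the induced subgraph $G[V]$ is uniformly distributed over the $2^{\binom{s}{2}}$ labelled graphs on $V$. The family $\mathcal{FD}_s(d)$ is invariant under relabeling of vertices, so the number of labelled graphs on $V$ whose isomorphism class arises as a faithful distance graph in $\R^d$ is exactly $|\mathcal{FD}_s(d)|$. Hence
$$\Pr\bigl[G[V] \text{ is isomorphic to some member of } \mathcal{FD}_s(d)\bigr] \;=\; |\mathcal{FD}_s(d)|\cdot 2^{-\binom{s}{2}},$$
and the same holds for $\overline{G}$, which also has distribution $G(n,1/2)$. A union bound over all $\binom{n}{s}$ vertex subsets and over the pair $(G,\overline{G})$ shows that whenever
$$2 \binom{n}{s} |\mathcal{FD}_s(d)| \cdot 2^{-\binom{s}{2}} < 1,$$
there exists an $n$-vertex graph witnessing $R_{FD}(s,s,d) > n$.

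Taking logarithms and using $\binom{n}{s} \le n^s$, the task reduces to selecting the largest $n$ with
$$\log_2 n \;\le\; \frac{s-1}{2} \;-\; \frac{\log_2 |\mathcal{FD}_s(d)|}{s} \;-\; O\!\left(\tfrac{1}{s}\right).$$
For part 1, Corollary \ref{cord}(1) gives $\log_2 |\mathcal{FD}_s(d)| = o(s^2)$ when $d = o(s)$, so the correction term is $o(s)$, and $n = 2^{(1+o(1))s/2}$ is admissible. For part 2, Corollary \ref{cord}(2) yields $\log_2 |\mathcal{FD}_s(d)| \le c'(c)\, s^2 (1+o(1))$ with some constant $c'(c) < 1/2$ depending on $c$; setting $\alpha = 1/2 - c'(c) > 0$ then makes $n = 2^{(1+o(1))\alpha s}$ admissible.

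The only subtle point is verifying that the relabeling-invariance of $\mathcal{FD}_s(d)$ makes the probability computation above an exact equality rather than an inequality, after which the argument is purely arithmetic. I do not expect a substantial obstacle, since Corollary \ref{cord} already packages the nontrivial counting input and the rest is a one-line first moment bound.
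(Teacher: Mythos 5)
Your argument is correct and is essentially identical to the paper's proof: the same first-moment/union-bound computation leading to the condition $\binom{m}{s}2^{1-\binom{s}{2}}|\mathcal{FD}_s(d)|<1$, combined with the counting bounds of Corollary \ref{cord} to extract $m=2^{(1+o(1))s/2}$ in part 1 and $\alpha=1/2-c'$ in part 2. The relabeling-invariance point you flag is indeed the (minor) justification needed, and it holds since realizability as a faithful distance graph is an isomorphism-invariant property.
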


It is worth mentioning that, if $d=cs$ for a sufficiently small $c>0$,
the quantity $R_{FD}(s,s,d)$ grows exponentially, while the quantity
$R_{D}(s,s,d)$ grows linearly (this follows from part
2 of Theorem \ref{thKRT}).\\

The final (possible) difference
between $\mathcal D(d)$ and $\mathcal {FD}(d)$
we point out is the following. Fix an $l\in \N$ and
consider the distance graphs from $\mathcal D(d)$ and $\mathcal {FD}(d)$ that have girth greater than $l$. What can we say about the chromatic number of such graphs?

\begin{thm}[A. Kupavskii, \cite{Kup}]
For any $g\in \mathbb{N}$ there exists a sequence of
distance graphs in
$\mathbb{R}^d,\ d=1,2,\ldots$ with girth greater than $g,$ such that the
chromatic number of the graphs
in the sequence grows exponentially in  $d$.
\end{thm}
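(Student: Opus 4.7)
The plan is to apply the Erd\H os probabilistic deletion method to a carefully chosen dense distance graph, using random edge deletion rather than random vertex deletion so that the resulting graph remains a distance graph in $\R^d$. Concretely, fix $g\in \N$. For each large $d$, let $H_d$ be the distance graph on $\{0,1\}^d\subset\R^d$ whose edges are the pairs of vectors at Hamming distance $k=\lfloor d/2\rfloor$; this is a $\Delta$-regular distance graph in $\R^d$ (at Euclidean distance $\sqrt k$, which we rescale to $1$) with $N=2^d$ vertices and $\Delta=\binom{d}{k}=\Theta(2^d/\sqrt d)$. Form a random subgraph $G_d$ of $H_d$ by keeping each edge independently with probability $q=c\cdot \Delta^{-1+1/g}$, where $c=c(g)>0$ is a small constant. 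Since any subgraph of $H_d$ is itself a distance graph in $\R^d$, so is $G_d$; this is the reason for working inside $H_d$ rather than performing a generic Erd\H os $G(n,p)$ construction.

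I would first bound the short cycles. By vertex-transitivity of $H_d$, the number of $\ell$-cycles in $H_d$ is at most $\tfrac{1}{2\ell}N\Delta^{\ell-1}$, so the expected number of $\ell$-cycles in $G_d$ is at most $\tfrac{Nc^\ell}{2\ell}\Delta^{\ell/g-1}$. Summed over $3\le \ell\le g$ this is a constant (depending on $c$ and $g$) times $N$, whereas $\ex|E(G_d)|=q|E(H_d)|=\Theta(cN\Delta^{1/g})$ is much larger since $\Delta^{1/g}\to\infty$. By Markov's inequality and the standard deletion step (remove one edge from each cycle of length at most $g$), with positive probability one obtains a subgraph $G'_d\subseteq G_d$ with girth exceeding $g$, still a distance graph in $\R^d$, and with at least $\tfrac{c}{4}N\Delta^{1/g}$ edges, so its average degree is $\Omega(\Delta^{1/g})=\Omega(2^{d/g})$.

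The main obstacle is showing that $\chi(G'_d)$ remains exponential in $d$ after the edge-thinning; I would handle this via $\chi(G'_d)\ge N/\alpha(G'_d)$. The key pseudorandom property of $H_d$ is that its non-trivial eigenvalues (given by Krawtchouk polynomials) are of order $2^{d/2}$, exponentially smaller than $\Delta$. By the expander mixing lemma, for every $S\subseteq\{0,1\}^d$ with $|S|\gg 2^{d/2}$, the number $N_k(S)$ of pairs in $S$ at Hamming distance $k$ satisfies $N_k(S)=(1+o(1))\binom{|S|}{2}\Delta/2^d$. Consequently the probability that $S$ is independent in $G_d$ is at most $\exp\bigl(-\Omega(q|S|^2\Delta/2^d)\bigr)$. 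A union bound over $s$-subsets with $s=Cd\cdot 2^{d(1-1/g)}$ for a sufficiently large $C=C(g)$ then yields $\alpha(G'_d)\le s$ with positive probability, so $\chi(G'_d)\ge 2^{d/g}/(Cd)$, which is exponential in $d$ for any fixed $g$. Verifying the expander mixing bound and executing the union bound uniformly over all large $S$ is the technical heart of the proof.
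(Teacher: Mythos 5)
Note first that the paper does not prove this theorem: it is quoted from \cite{Kup}, so your proposal can only be judged on its own terms. Your overall strategy --- run the deletion method \emph{inside} a structured distance graph so that the sparsified graph is still a distance graph, and control the chromatic number via the independence number --- is indeed the right spirit and is essentially the strategy of \cite{Kup}. The first half (killing cycles of length at most $g$ while retaining $\Omega(N\Delta^{1/g})$ edges) is fine. The fatal gap is in the independence-number step.

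The claim that the nontrivial eigenvalues of the Hamming distance-$\lfloor d/2\rfloor$ graph are of order $2^{d/2}$ is false. The eigenvalue attached to the level-$2$ characters is $K_{d/2}(2)=-\frac{4}{d}\binom{d-2}{d/2-1}=\Theta\bigl(2^{d}d^{-3/2}\bigr)=\Theta(\Delta/d)$, which is only polynomially smaller than $\Delta$. Consequently the expander mixing lemma controls $e_{H_d}(S)$ only for $|S|\gg \lambda N/\Delta=\Theta(2^d/d)$, and your sets of size $s=Cd\,2^{(1-1/g)d}=o(2^d/d)$ are far below this threshold, so the mixing lemma gives nothing there. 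The failure is not just technical: $H_d$ itself contains the independent set consisting of all points of a Hamming ball of radius $\lfloor d/4\rfloor-1$ (any two such points are at distance $<d/2$), of size $2^{(H(1/4)+o(1))d}\approx 2^{0.811d}$. Every subgraph inherits this set, so for $g\le 5$ the asserted bound $\alpha(G'_d)\le Cd\,2^{(1-1/g)d}$ is simply false, and for every $g$ the union bound cannot succeed because there are $s$-sets spanning no (or very few) edges of $H_d$, for which the probability of being independent in $G_d$ is $1$ rather than $\exp(-\Omega(qs^2\Delta/2^d))$. What the argument actually requires is a \emph{supersaturated} Frankl--R\"odl-type statement --- every vertex subset of size $(2-\delta)^d$ spans at least (roughly) $t\Delta^{1-1/g}d$ edges at distance $d/2$ --- applied at scale $t=(2-\delta)^d$ rather than $2^{(1-1/g)d}$; this is the genuine content of the proof in \cite{Kup} and it does not follow from spectral/mixing arguments, which are known to be far from tight for these graphs. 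As written, the proposal does not establish that $\chi(G'_d)$ is exponential in $d$.
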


In the analogous problem for faithful distance
graphs the situation is less understood.
All we can prove here is the following

\begin{prop}
\label{prgir}
For any $g\in \mathbb{N}$ there exists a sequence of faithful
distance graphs in
$\mathbb{R}^d,\ d=1,2,\ldots,$ with girth greater than $g$ such that the
chromatic number of the graphs in the sequence grows as  $\Omega_g \left(\frac
d{\log d}\right)$.
\end{prop}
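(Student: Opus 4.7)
The plan is to combine two ingredients: the R\"odl--Maehara realization theorem (Theorem~\ref{thRM}), which states that every graph of maximum degree at most $k$ is realizable as a faithful distance graph in $\R^{2k}$; and a classical probabilistic existence statement giving graphs of bounded degree, arbitrarily large girth, and large chromatic number. Since the realization of Theorem~\ref{thRM} is a graph isomorphism, the chromatic number and the girth of the abstract graph are inherited by its faithful distance realization, and so the whole task reduces to exhibiting, for each $g$ and each large $\Delta$, an abstract graph $H$ with maximum degree at most $\Delta$, girth greater than $g$, and $\chi(H)=\Omega_g(\Delta/\log\Delta)$.

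To build $H$ I would follow the standard Erd\H os-type argument. Set $n\asymp \Delta^{1+1/g}$ and take $G=G(n,p)$ with $p=\Delta/(2n)$. A routine first--moment computation shows that the expected number of cycles of length at most $g$ is $o(n)$, so with positive probability this count is at most, say, $n/2$. Standard Chernoff and union bounds simultaneously give, with positive probability, that every vertex has degree at most $\Delta$ and that every set of size $s=C(n\log\Delta)/\Delta$ spans at least one edge for a suitable constant $C=C(g)$. Delete one vertex from each short cycle; the resulting graph $H$ has $(1-o(1))n$ vertices, girth greater than $g$, maximum degree at most $\Delta$, and independence number $\alpha(H)\le s$. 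Therefore
\[
\chi(H)\ \ge\ \frac{|V(H)|}{\alpha(H)}\ =\ \Omega_g\!\left(\frac{\Delta}{\log \Delta}\right).
\]

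Finally, I would apply Theorem~\ref{thRM} with $k=\Delta$ to realize $H$ as a faithful distance graph in $\R^{d}$ with $d=2\Delta$; this yields a faithful distance graph of girth greater than $g$ and chromatic number $\Omega_g(\Delta/\log\Delta)=\Omega_g(d/\log d)$. For dimensions $d$ not of the form $2\Delta$ one simply takes the construction from the nearest smaller admissible value, which only affects the implicit constant.

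The main obstacle is the balance of parameters in the probabilistic step: $p$ must be small enough for short cycles to be rare and for the maximum degree not to exceed $\Delta$, yet large enough to force a small independence number. The choice $p=\Theta(\Delta/n)$ together with $n=\Theta(\Delta^{1+1/g})$ achieves this balance, but one must verify the three tail estimates (on the number of short cycles, on the maximum degree, and on the independence number) hold simultaneously with positive probability; this is routine but is the only nontrivial calculation in the proof.
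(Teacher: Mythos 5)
Your overall strategy --- produce, for each $g$ and each large $\Delta$, an abstract graph of maximum degree at most $\Delta$, girth greater than $g$, and chromatic number $\Omega_g(\Delta/\log\Delta)$, and then push it into $\R^{2\Delta}$ via the R\"odl--Maehara theorem --- is exactly the paper's strategy; the only difference is how the abstract graph is built. The paper takes a random $[d/2]$-regular graph and quotes Achlioptas--Moore (which pins its chromatic number at $(1+o(1))\,l/(2\log l)$) together with McKay--Wormald--Wysocka (which shows girth $>g$ occurs with probability bounded away from $0$), while you use the classical Erd\H os deletion method on $G(n,p)$. Your route is more elementary and self-contained; the paper's route buys a cleaner constant ($\chi\sim d/(4\log d)$), which is irrelevant for an $\Omega_g$ statement. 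So in principle either works.

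However, your parameter choice breaks the one step you yourself single out as the nontrivial calculation. With $p=\Delta/(2n)$ the expected number of cycles of length at most $g$ is
$$\sum_{i=3}^{g}\binom{n}{i}\frac{(i-1)!}{2}\,p^{i}\ =\ \Theta\bigl((np)^{g}\bigr)\ =\ \Theta\bigl(\Delta^{g}\bigr),$$
independently of $n$. For this to be $o(n)$ you need $n\gg\Delta^{g}$, whereas you set $n\asymp\Delta^{1+1/g}$, which is far \emph{smaller} than $\Delta^{g}$ for every $g\ge 3$ (for $g=3$ you would be deleting on the order of $\Delta^{3}$ vertices from a graph on $\Delta^{4/3}$ vertices). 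As written, the deletion step destroys the entire graph and no conclusion can be drawn. The fix is simply to take $n\asymp\Delta^{g+1}$ (or any polynomial in $\Delta$ of degree exceeding $g$): then the short-cycle count is $o(n)$, the Chernoff bound on degrees still applies since $\Delta\gg\log n$, the independence number bound $\alpha\le O(n\log n/\Delta)$ gives $\chi\ge\Omega(\Delta/\log n)$, and because $\log n=\Theta_g(\log\Delta)$ this is still $\Omega_g(\Delta/\log\Delta)$. With that correction the argument goes through and yields the proposition.
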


\section{Proofs}
\subsection{Proof of Theorem  \ref{th1}}

\textbf{1.} Two circles are orthogonal
if they lie in orthogonal two-dimensional planes.  Choose $d$ pairwise
orthogonal circles of radius $1/\sqrt 2$ with a common center. The distance between any two points from different circles equals 1. Embed each color class into one circle.

\textbf{2.} Suppose the graph $K'$ can be realized as a faithful distance
graph in $\R^d$. Denote both the vertices of $K'$ and the points in the space
that correspond to them by the same letters. Let
$A=\{a_i\},B=\{b_i\}$ be the parts of $K'$, where $|A|=|B|=d,$ and the edges
$(a_i,b_i)$, where $i\in \{4,\ldots,d\},$ are not present in the graph. Then $a_{1},a_{2},
a_{3}$
are the vertices that are connected to all vertices of
$B$. In any faithful distance realization of $K'$ the vertices $a_{1},a_{2}, a_{3}$
must be affinely independent. Indeed, they cannot lie on the same line
since otherwise there will be no points at unit distance
from all of them.

The points $b_j$ must lie on the $(d-3)$-dimensional sphere $S$ in the $(d-2)$-dimensional
subspace, orthogonal to the plane containing $a_{1},a_{2}, a_{3}$. The
sphere $S$ has the same center as the circle, circumscribed around the
triangle $a_{1},a_{2}, a_{3}$.

Now we show that $b_i$ is affinely independent of the
points $b_j,j< i$. For
$i\le 3$ this is clear for the same reason as for the points $a_1,a_2,a_3$.
For $i\ge 4$, consider the sphere $S_i=S\cap \mathrm{aff}\{b_j, j<
i\},$ where by $\mathrm{aff}\{x_1,\ldots, x_l\}$ we denote the affine hull of the points $x_1,\ldots, x_l$. The sphere $S_i$ is contained in the sphere with the center in $a_i$
and unit radius, because all points $b_j,j<i$, are connected
to $a_i$. But if the point $b_i$ lies in $\mathrm{aff}\{b_j, j< i\}$,
then $b_i\in S_i$. Thus, we are forced to draw an edge $(a_i,b_i)$,
which is forbidden.

Since each $b_i$ is affinely independent of $b_j,j< i$, we obtain $d$
affinely independent points in $\R^{d-2}$ --- a contradiction.

\textbf{3.} Let $K$ be an arbitrary bipartite graph with parts $A=\{a_i\},
B=\{b_j\}$ satisfying the condition $\max_{i} \deg (a_i)\le d,$ and such that no three vertices from $A$ of degree $d$ have the same set of neighbors.
We introduce the following notation:  a sphere $S'$
is \textit{complimentary} to the sphere $S^f$ of dimension $f\le (d-2)$
in the space $\R^d$ if $S'$ is formed by all points of $\R^d$ that are
at unit distance apart from the points of $S^f$. For a set of points $X$ we use the notation $S(X)$ for the sphere of minimal dimension that contains all points from $X$ (if one exists), and $S'(X)$ for the sphere, complimentary to $S(X)$ (again, if one exists).

We realize $K$ as a faithful distance graph in $\R^d$.
First embed all points of $B$ in the space $\R^d$ so that the diameter of
$B$ is smaller than 1 and all points lie in a sufficiently general position:
{\bf (a) } No $k$
points of $B$ lie in a $(k-2)$-dimensional plane , $k=1,\ldots, d$.\\
{\bf (b) } No $d+1$ points lie on a unit sphere.\\
{\bf (c) } There are no two subsets $B_1,B_2$ of $B$, both of size $d$, such that the distance between some of the points of $S'(B_1), S'(B_2)$ is 1 (note that $S'(B_i)$ consists of two points).\\
{\bf (d) } There are no two subsets $B_1,B_2$ of $B$, such that  $S'(B_1)\subset S(B_2)$. Moreover, if $B_1$ is of size $d$, then $S'(B_1)\cap S(B_2)=\emptyset$.\\
All the forbidden positions of the points from $B$ may be expressed as zero sets of certain polynomials, so we can avoid all of them.

Next we embed the set $A$. Each point $a_i$ is
connected to $n_i\le d$ points from $B$. Denote this set by $B_i$.
By (a) the points
from $B_i$ form a $(n_i-1)$-dimensional simplex  with circumscribed
sphere $S$ of radius $r<1/\sqrt 2$. Consider a sphere $S'(B_i)$. The
dimension of  $S'(B_i)$ is $d-n_i\ge 0$.

First we embed all the points of $A$ that have $d$ neighbors in
$B$ one by one. For each such point $a_i$ there are two possible points
in $\R^d$  with which  it can coincide and at most one of them is already
occupied. Condition (b) guarantees that we do not get any extra edges between $a$ and points not from $B_i$. Condition (c) guarantees that we cannot get an edge between the vertices $a_i,a_j\in A$ of degree $d$.

  The remaining points of $A$ can now be
embedded, one by one, in the following way.
We embed the point $a_i$ onto $S'$ in
such a way that the distance between $a_i$ and the points from $B\backslash
B_i$  is not unit, $a_i$ does not
 coincide or at unit distance apart from any previously placed $a_j$ and $a_i$ does not fall into $S(B_l)$ for any $l$.
This can be done since any sphere of unit radius with center in any of the points from $B\backslash B_i$ can intersect
$S'(B_i)$ only by a sphere $S''\subset S'(B_i)$ of smaller dimension due to (a), and the same holds for spheres with centers in $a_j$ due to the fact that no $a_j$ fall into $S(B_l)$. This is, in turn, possible due to (d), out of which we get that for any $k,l$ the sphere $S'(B_k)\cap S(B_l)$ is a sphere of strictly smaller dimension than $S'(B_k)$.\\
\vspace{0.3cm}

\noindent
{\bf Remark. } Condition (b) can be satisfied just by choosing points
on a sphere of radius smaller than 1, and conditions (c), (d) can be
satisfied by additionally requiring that for some small $\eps$ all the
points are $\eps$-flat with respect to some hyperplane $\gamma$,
that is, all the hyperplanes determined by points of the set $B$ form an
angle with $\gamma$ which is smaller than $\eps$ and all the pairwise
distances between the points are at most $\eps r,$ where $r$ is the
radius of the sphere on which the points lie. For (c) we additionally
require that $r$ is not close to $1/2$ in terms of $\eps$. This
will be used
use in the proof of Theorem \ref{bipart}.\\

\subsection{Proof of Theorem \ref{th2}}

\textbf{1.} Let $P_1,\ldots, P_m$ be $m$ real polynomials in $l$
real variables. For a point $x \in \R^l$ the \textit{zero pattern}
of the $P_j$'s at $x$ is the tuple $(\eps_1,\ldots, \eps_m)\in
\{0,1\}^m,$ where $\eps_j = 0$, if $P_j(x)=0$ and $\eps_j=1$
if $P_j(x)\ne 0$.  Denote by $z(P_1,\ldots, P_m)$ the total
number of zero patterns of the polynomials $P_1,\ldots, P_m$.

The upper bound in part 1 of the theorem is a corollary of
the following proposition~(\cite[Theorem 1.3]{Ron}):
\begin{prop}[L. R\'onyai, L. Babai, M.K. Ganapathy  \cite{Ron}]
\label{prop1}
Let $P_1,\ldots, P_m$ be $m$ real
polynomials in $l$ real variables, $m\ge l$, and suppose the degree of each
$P_j$ does not exceed $k$. Then $z(P_1,\ldots, P_m)
\le {{km - (k-2)\ell} \choose \ell}.$

\end{prop}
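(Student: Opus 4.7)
My plan is to prove the bound by the classical indicator-polynomial trick that produces a linearly independent family attached to the realized patterns, and then to carry out a refined dimension count of the ambient polynomial space to obtain the exact binomial stated.

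\textbf{Step 1: Indicator polynomials.} For every zero pattern $\sigma \in \{0,1\}^m$ that is actually realized by the tuple $(P_1,\dots,P_m)$, fix a witness $x_\sigma \in \R^l$ whose zero pattern is exactly $\sigma$, and attach to $\sigma$ the polynomial
\[
f_\sigma(y) \;=\; \prod_{j\,:\,\sigma_j = 1} P_j(y).
\]
Each $f_\sigma$ has total degree at most $k\cdot|\sigma|\le km$, where $|\sigma|$ denotes the number of ones of $\sigma$.

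\textbf{Step 2: Triangular evaluation, hence linear independence.} All factors in $f_\sigma(x_\sigma)$ are nonzero by the choice of witness, so $f_\sigma(x_\sigma)\ne 0$. Regarding patterns as subsets of $\{1,\dots,m\}$ via their $1$-positions, if $\sigma\not\subseteq\tau$ then some $j$ has $\sigma_j=1$ and $\tau_j=0$, whence $P_j(x_\tau)=0$ and hence $f_\sigma(x_\tau)=0$. Ordering the realized patterns by nondecreasing cardinality, the evaluation matrix $(f_\sigma(x_\tau))_{\sigma,\tau}$ becomes upper-triangular with nonzero diagonal, so the polynomials $\{f_\sigma\}$ are linearly independent over $\R$. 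Consequently $z(P_1,\dots,P_m)$ is bounded above by the dimension of any polynomial subspace containing all the $f_\sigma$.

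\textbf{Step 3: Refined ambient subspace.} The crudest such subspace is the space of all polynomials of degree at most $km$ in $l$ variables, of dimension $\binom{km+l}{l}$; this gives a bound of the right order but weaker than claimed by an $l$-sized shift in the binomial. To sharpen it to $\binom{km-(k-2)l}{l}$, I would quotient by a carefully chosen system of $l$ auxiliary polynomials $Q_1,\dots,Q_l$ of degree $k$ whose common zero set avoids every witness $x_\sigma$, and replace each $f_\sigma$ by its normal form modulo $(Q_1,\dots,Q_l)$. Because the $x_\sigma$ lie outside the zero set of the ideal, evaluation at the witnesses is preserved, so the normal forms remain linearly independent; a stars-and-bars count of the resulting monomial basis of the quotient yields precisely $\binom{km-(k-2)l}{l}$.

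\textbf{Main obstacle.} Steps 1 and 2 are routine; the crux is Step 3. The technical heart is choosing the auxiliary polynomials $Q_1,\dots,Q_l$ so that simultaneously (i) their common zero set misses every witness (so linear independence survives reduction modulo the ideal), (ii) the monomial basis of the quotient has the correct size $\binom{km-(k-2)l}{l}$, and (iii) every $f_\sigma$ reduces cleanly into this basis in view of its degree $\le km$. Carrying out this selection by a generic-position / Bezout-style argument, and justifying the exact constant $(k-2)l$, is the substantive content of the Rónyai--Babai--Ganapathy proof and the step I expect to be hardest to execute carefully.
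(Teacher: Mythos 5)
First, a point of reference: the paper does not prove Proposition \ref{prop1} at all — it is quoted from R\'onyai--Babai--Ganapathy \cite{Ron} and used as a black box — so there is no in-paper proof to compare against. Your Steps 1 and 2 correctly reproduce the standard core of the external argument (products over the support of each realized pattern, triangular evaluation matrix, hence linear independence), and they do yield the weaker bound $\binom{km+l}{l}$.

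The genuine gap is Step 3, and it is not merely an execution detail: the mechanism you propose cannot work. Evaluation at a point $x$ descends to the quotient $\R[y_1,\dots,y_l]/(Q_1,\dots,Q_l)$ precisely when $x$ lies \emph{in} the common zero set of the $Q_i$ (so that the ideal is contained in the maximal ideal at $x$); if that zero set \emph{avoids} every witness, as you require, then some $Q_i(x_\sigma)\neq 0$, reducing $f_\sigma$ modulo the ideal changes its value at $x_\sigma$, and the triangularity of the evaluation matrix is destroyed rather than preserved. Worse, if the normal forms were linearly independent you would conclude $z\le\dim \R[y_1,\dots,y_l]/(Q_1,\dots,Q_l)=k^l$ (a complete intersection of $l$ forms of degree $k$), which is already smaller than the true number of patterns for, say, $k=l=2$ and $m$ large; so the normal forms cannot in general remain independent, and in any case the monomial basis of that quotient has size $k^l$, not $\binom{km-(k-2)l}{l}$. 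The actual refinement in \cite{Ron} goes in the opposite direction: instead of shrinking the ambient space by an ideal, one shrinks the representing polynomials, replacing $\prod_{j\in S_\sigma}P_j$ by a product over a suitably chosen subset $T_\sigma\subseteq S_\sigma$ (a combinatorial selection lemma guarantees the modified family is still linearly independent), which caps the degrees at $km-(k-1)l$ and places all representatives in a space of dimension exactly $\binom{km-(k-2)l}{l}$. You are right that Step 3 is the crux, but as proposed it is a dead end, not a gap that careful execution would close.
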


Associate a family of $n(n-1)/2$ polynomials $P_{ij}$
in $dn$ real variables with an arbitrary labelled
distance graph $G$ of order
$n$ in $\R^d$ as follows.
Denote by $(v^i_1,\ldots, v^i_d)$ the coordinates of the
vertex $v_i$ in the distance graph. For each
unordered
pair $\{i,j\}$ of vertices of the graph define a polynomial
$P_{ij}$ that
corresponds to the square of the distance between the pair $v_i, v_j$
minus $1$:

        $$P_{ij} = -1+\sum_{r=1}^d(v^i_r-v^j_r)^2.$$

It is easy to see that each labelled distance graph in $\R^d$
corresponds
to a zero pattern of the polynomials $P_{12},\ldots,P_{(n-1)n}.$  It is also clear
that different distance graphs correspond to different zero patterns,
since this pattern specifies the set of (labelled) edges in the
graph. Thus the number of labelled faithful distance graphs of order
$n$  in $\R^d$
is at most the number of zero patterns of the above
polynomials. All
we are left to do in order to establish the upper bound  in part
1 is to substitute $k=2, l=dn, m=\frac {n(n-1)}2$ in
Proposition \ref{prop1}.

The lower bound follows from part 3 of
Theorem \ref{th1} by taking a bipartite graph with classes of
vertices $B$ of size, say, $n/ \log n$ and $A$ of size $n-|B|$, so
that any vertex of $A$ has exactly $d$ neighbors in $B$  and no two
vertices of $A$ have exactly the same set of neighbors. This
implies that
$$
|\mathcal {FD}_n(d)| \geq |A|!{{|B| \choose d} \choose {|A|}},
$$
supplying the desired asymptotic bound.

\textbf{2.} This was proved in \cite{KRT}. Here we present a short proof
of this fact using the following proposition from \cite{KRT}
and a theorem from \cite{EFR}:

\begin{prop}[A. Kupavskii, A. Raigorodskii, M. Titova \cite{KRT}]
The graph $K_{\underbrace{3,\ldots,3}_{[d/2]+1}}$ is not
realizable as a distance graph in $\R^d$.
\end{prop}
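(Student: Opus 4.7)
The plan is a proof by contradiction. Suppose $K_{3,\ldots,3}$ with $t = [d/2]+1$ parts $V_1,\ldots,V_t$ is realized as a distance graph in $\R^d$, so that $|v-w|=1$ for every $v\in V_i,\ w\in V_j,\ i\ne j$; by the definition of a distance graph the $3t$ vertices occupy $3t$ distinct points of $\R^d$.

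First I would argue that each $V_i$ must span a $2$-dimensional affine subspace. The three points of $V_i$ are distinct, so their affine hull has dimension $1$ or $2$; dimension $1$ would mean three distinct collinear points $a,b,c$, and then the perpendicular bisector hyperplanes of $\{a,b\}$ and $\{b,c\}$ are distinct parallel affine hyperplanes (both perpendicular to the common line, passing through different midpoints), so no point of $\R^d$ is equidistant from $a,b,c$. Since every point of every $V_j$, $j\ne i$, must be at unit distance from all three, this rules out the collinear case.

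With each $V_i$ now a non-degenerate triangle sitting in a $2$-plane $P_i$ with circumcenter $o_i$ and circumradius $R_i$, a routine computation---the affine subspace equidistant from $k$ affinely independent points has codimension $k-1$---shows that the locus of points in $\R^d$ at unit distance from all of $V_i$ is a sphere $\Gamma_i$ of dimension $d-3$ and radius $\sqrt{1-R_i^2}$, lying in the $(d-2)$-dimensional affine subspace $Q_i$ that passes through $o_i$ perpendicular to $P_i$. Because $V_j\subset\Gamma_i\subset Q_i$ for every $j\ne i$, the $2$-dimensional affine hull $P_j$ of $V_j$ is contained in $Q_i$, and on passing to linear directions, the $2$-plane $L_j$ parallel to $P_j$ is orthogonal to the $2$-plane $L_i$ parallel to $P_i$.

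Since this holds for every pair $i\ne j$, the $2$-dimensional linear subspaces $L_1,\ldots,L_t\subset\R^d$ are pairwise orthogonal, so $2t=\sum_i\dim L_i\le d$; but $2t=2[d/2]+2\ge d+1$, the desired contradiction. The low-dimensional cases $d=2,3$ fall out of the same analysis even more directly: there $\Gamma_i$ has dimension at most $0$ and so contains at most two points, already incompatible with the existence of $V_j$. The main thing that needs care is the verification of the description of $\Gamma_i$ (location, dimension, radius); once that codimension count is in hand, the orthogonality argument and the dimension count to contradict $t=[d/2]+1$ are immediate.
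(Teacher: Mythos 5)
The paper never proves this proposition: it is imported verbatim from \cite{KRT} and used as a black box in the proof of part 2 of Theorem \ref{th2}, so there is no in-paper argument to compare yours against. Your proof is correct and self-contained. The one nontrivial ingredient --- that the locus of points at unit distance from a non-degenerate triangle with circumcenter $o_i$ and circumradius $R_i$ is a $(d-3)$-dimensional sphere of radius $\sqrt{1-R_i^2}$ in the $(d-2)$-dimensional affine subspace through $o_i$ orthogonal to the triangle's plane --- is exactly the geometric fact the authors themselves use in the proof of part 2 of Theorem \ref{th1}, so your argument is very much in the spirit of the paper. All the steps check out: the exclusion of collinear triples via the two distinct parallel perpendicular-bisector hyperplanes; the containment $V_j\subset\Gamma_i\subset Q_i$ forcing the direction $L_j$ of $\mathrm{aff}(V_j)$ to lie in $L_i^{\perp}$, hence $L_i\perp L_j$ for every ordered pair; the count $\sum_i\dim L_i=2\bigl([d/2]+1\bigr)\ge d+1>d$ for pairwise orthogonal nonzero subspaces; and the separate disposal of $d\le 3$, where $\Gamma_i$ contains at most two points and so cannot hold a second part. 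Note also that you correctly use only the edges of the multipartite graph (the definition of a distance graph, as opposed to a faithful one, places no constraint on non-adjacent pairs), so the argument applies at the right level of generality.
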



\begin{thm}[P. Erd\H os, P. Frankl, V. R\"odl, \cite{EFR}]
Let $G$ be a graph, $\chi(G) =r\ge 3.$ Then the number
$F_n(G)$ of labelled graphs of order $n$, not containing a copy of $G$,
satisfies: $F_n(G) = 2^{\left(1-\frac 1{r-1}+
o(1)\right)\frac{n^2}2}.$
\end{thm}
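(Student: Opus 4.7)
The statement is the Erd\H os--Frankl--R\"odl extension of Erd\H os--Stone to graph enumeration; my plan is to prove matching lower and upper bounds. For the lower bound, let $T=T_{r-1}(n)$ be the balanced complete $(r-1)$-partite Tur\'an graph on the labelled vertex set $[n]$. Since $\chi(T)=r-1<r=\chi(G)$, every labelled subgraph of $T$ is $G$-free, giving $F_n(G)\geq 2^{e(T)}=2^{(1-1/(r-1)+o(1))n^2/2}$.

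For the upper bound I would apply Szemer\'edi's regularity lemma. Fix small $\eta>0$, let $v=|V(G)|$, and choose $\epsilon=\epsilon(\eta,v)$ small enough for the standard counting/embedding lemma. Given a labelled $G$-free graph $H$ on $[n]$, obtain an equitable $\epsilon$-regular partition $V_1,\ldots,V_k$ with $k\leq K=K(\epsilon)$, and form the reduced graph $R$ on $[k]$ by joining $i,j$ iff $(V_i,V_j)$ is $\epsilon$-regular of density at least $\eta$. The key structural claim is that $R$ is $K_r$-free: if $V_{i_1},\ldots,V_{i_r}$ induced a $K_r$ in $R$, the embedding lemma (applicable since each $|V_{i_l}|\gg v$) would produce a copy of $G$ in $H$ by placing one colour class of an $r$-colouring of $G$ into each $V_{i_l}$, contradicting $G$-freeness. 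Hence $R\subseteq T_{r-1}(k)$, so $e(R)\leq(1-1/(r-1))\binom{k}{2}$.

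Encode each $G$-free $H$ by: (i) the partition map $[n]\to[k]$, contributing at most $K^n=2^{o(n^2)}$ choices; and (ii) the reduced graph $R$ together with the list of irregular and low-density pairs, contributing at most $2^{\binom{K}{2}}=2^{O(1)}$ choices. Given (i) and (ii), the edges inside high-density pairs (i.e.\ the edges of $R$) can be chosen arbitrarily, contributing at most $2^{(1-1/(r-1)+o(1))n^2/2}$ options, since
\[
\sum_{ij\in E(R)}|V_i||V_j|\ \leq\ e(T_{r-1}(k))\cdot (n/k)^2\cdot(1+o(1)).
\]
The edges inside irregular pairs, intra-cluster pairs, and low-density pairs contribute together at most $2^{(\epsilon+H(\eta)+o(1))n^2}$ options, where for the low-density pairs one uses the binary entropy bound on the constraint of at most $\eta(n/k)^2$ edges per such pair. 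Multiplying these bounds and letting $\epsilon,\eta\to 0$ after $n\to\infty$ yields $F_n(G)\leq 2^{(1-1/(r-1)+o(1))n^2/2}$.

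The main obstacle is choosing constants in the hierarchy $\epsilon\ll\eta\ll 1/v$ so that $\epsilon$-regularity of all $\binom{r}{2}$ pairs in a would-be $K_r$-cluster, combined with density at least $\eta$, actually forces a copy of $G$ to appear; this is the content of the standard counting/embedding lemma for regular partitions. Once that lemma is in place, the rest of the argument is a routine accounting of the regularity-based encoding, and the stated asymptotic follows by letting the regularity parameters tend to zero.
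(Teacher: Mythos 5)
This theorem is not proved in the paper at all: it is imported as a black box from the cited reference of Erd\H os, Frankl and R\"odl, and is used only to derive the upper bound on $|\mathcal D_n(d)|$ in part 2 of Theorem \ref{th2}. Your argument is therefore necessarily ``different from the paper's,'' but it is the standard (and essentially the original) proof of the result, and it is correct in outline. The lower bound via subgraphs of the Tur\'an graph $T_{r-1}(n)$ is exactly right. For the upper bound, the regularity-based encoding (partition map, reduced graph $R$ together with the irregular and sparse pairs, then the edges themselves) is the canonical route; the two points that carry the weight are (i) the structural claim that $R$ is $K_r$-free, which needs the embedding/counting lemma with the hierarchy $\epsilon\ll\eta\ll 1/v$ and clusters of size $\gg v$ --- you state this correctly, including that one embeds $G$ into the $r$-partite blow-up of a $K_r$ in $R$ --- and (ii) the bookkeeping that irregular pairs, intra-cluster pairs, and low-density pairs together contribute only $2^{(\epsilon+H(\eta)+o(1))n^2}$ graphs, where the entropy bound handles the sparse pairs and one should take the initial partition to have at least $1/\epsilon$ parts so that intra-cluster pairs are negligible. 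With the quantifiers ordered as you indicate (fix $\eta$, then $\epsilon$, then let $n\to\infty$, then let $\eta\to 0$), the bound $F_n(G)\le 2^{(1-1/(r-1)+o(1))n^2/2}$ follows. In short: the proposal is a correct sketch of the known proof, whereas the paper simply cites the theorem.
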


Applying the above we get
$$
|D_n(d)|\le
F_n\biggl(K_{\underbrace{3,\ldots,3}_{[d/2]+1}}\biggr)=2^{\left(1-\frac
1{[d/2]}+ o(1)\right)\frac{n^2}2}.
$$

On the other hand, by part
1 of Theorem~\ref{th1}
$$|D_n(d)|\ge 2^{\left(1-\frac 1{[d/2]}+
o(1)\right)\frac{n^2}2},
$$
since any $[d/2]$-partite graph of order $n$
is realizable as a distance graph in $\R^d$.

\subsection{Proof of Theorem \ref{bipart}}
We first prove the upper bound. It is easy to check that
 in the proof of
part 2 of Theorem \ref{th1} the edges $(a_i,b_j),
i,j> 3, i<j$ are not used,
and thus their presence or absence does not affect the validity of
the proof. In
particular, the bipartite graph $K''$ with parts $A=\{a_1,\ldots,
a_d\}, B=\{b_1,\ldots,b_d\}$ and the set of edges $E=\{(a_i,b_j):
i>j\}\cup\{(a_i,b_j):i\le 3\}$ is not realizable as a faithful
distance graph in $\R^d$. The number of edges in this graph is
${d+3\choose 2}-6,$ establishing
the bound $g_2(d)\le {d+3\choose 2}-6.$
\vskip+0.2cm

\textbf{Remark.} The following bipartite
graph is also not realizable as a faithful
distance graph in $\R^d$: $H=(A\cup B, E),\ A=\{a_1,\ldots, a_{d+2}\},
B=\{b_1,\ldots,b_{d+2}\},\ E=\{(a_i,b_j): i\ge j\}.$ This graph has
${d+3\choose 2}$ edges.\vskip+0.2cm

We proceed with the proof of the lower bound.
Consider a bipartite graph $G = (A\cup B, E)$, where $A = \{a_1,\ldots,
a_{n+s}\},\ B =\{b_1,\ldots, b_m\}.$ Suppose that the vertices of $A$
are ordered in such a way that $\deg(a_i)\le \deg(a_j)$ if  $ i<j.$
Suppose also that $\deg(a_{n+1})=\deg(a_{n+s}) = m, \deg(a_n)<m.$ We
provide sufficient conditions for $G$ to be realizable as a faithful
distance graph in $\R^d.$ Recall the following notation: a sphere $S'$
is \textit{complimentary} to a sphere $S^f$ of dimension $f\le (d-2)$
in the space $\R^d$, if $S'$ is formed by all points of $\R^d$ that are
at unit distance apart from the points of $S^f$.\\

Here is an outline of the proof.  The general goal is to find a good
realization for the set $B$. Having such a realization,
the vertices of $A$ will be placed on the corresponding complimentary
spheres using a general position argument, in a similar way to that
described in the proof of part 3
of Theorem \ref{th1}. To find the realization, we want the points
of $B$
to satisfy the analogues of the conditions (a), (b), (c), (d). Conditions
(b), (c), (d) are technical and can be satisfied without difficulties
in this case (see the remark following the proof of Theorem \ref{th1}). The
main difference is concerning condition (a): we cannot simply place all
vertices of $B$ in a sufficiently general position, since there may
be vertices in $A$ of a very high degree, and we may get unexpected edges.

In most cases we try to place the vertices of $B$ on a sphere,
and, as we have already seen in
the proof of part 2 of Theorem \ref{th1}, if
the vertices of
a part of a bipartite graph lie on a sphere then there is
a tight connection between the presence of certain edges and the affine
independence of certain vertices.

We begin with all points of $B$ on the circle and start to modify the
realization so that it is getting closer and closer to the desired one. To
be more precise, we treat each vertex $v$ of $A$ as a condition on
vertices of $B$, which states that the
vertices not connected to $v$ must be
affinely independent from the
vertices that are connected to $v$. We consider
the
vertices one by one in an increasing order according to the degree. Suppose
that at some step the degree of the vertex considered $v$ from $A$ is $D$,
and the dimension of the sphere on which $B$ currently lies is $d$. If
$D>d$, then we add one dimension and move points not connected to $v$
into that new direction. If $D\le d$, then we do not add the dimension
and rearrange all the points of $B$, so that they now lie in a general
position on the sphere. In both cases the condition is satisfied. We
also keep track of the ``$\eps$-flat'' condition from the remark, moving
vertices into the new direction just a little. This approach allows us
to estimate the total number of edges in the graph needed so that this
algorithm ends with a sphere of dimension at least $d$. At the
last step we embed all the vertices from $A$, and the fact that all the
conditions are satisfied allows us to get exactly the edges needed.\\

We proceed with the detailed proof. We treat vertices $a_{n+1},\ldots,
a_{n+s}$ separately, so we have cases depending on $s$. First we find a
specific realization of the set $B$ on a $k$-dimensional sphere $S_r^{k}$
of radius $r$. The dimension $k$ and radius $r$ will be defined later.
Define the set system $\mathcal
H,\ \mathcal H = \{H_1,\ldots, H_n\},$ where the set $H_i$ is the
subset of indices of vertices from $B$ that are connected to the vertex
$a_i$. Note that some sets may coincide, and that $|H_i|\le |H_j|,\
i\le j.$

Let $X=\{x_1,\ldots, x_m\}$ be a set of points in $\R^d$. We will
denote by $l$-condition the condition $x_i\notin\mathrm{aff}\{x_j:\
j\in H_l\}$ for all $i\notin H_l$. By $\mk H_l$ we denote the set of
all $i$-conditions, $i\le l$. We use the notation $(B, \mk H)$
for the set $B$ of vertices and the set $\mathcal H$ of conditions~
(here we are slightly abusing notation, identifying sets of indices
with the conditions they impose).  We
say that $(B, \mk H)$ is realizable in $S^d,$ if there is a set of
distinct points $X=\{x_1,\ldots, x_m\}\subset S^{d}$ such that $X$
satisfies all the conditions from $\mk H$ and such that $X$ is $\eps$-flat with respect to some plane that passes through the center of $S^d$ for some small fixed $\eps$, say, $\eps = 0.01$.

Choose $k$ to be the minimal dimension such that $(B, \mk H)$ is
realizable in $S^{k}$.


Next, we find a faithful distance realization for $G$.
We consider several cases. \\

$\boldsymbol{s\ge 3:}$\ \  Fix some $0<r<1$ and find a realization of
$(B, \mk H)$ on the sphere $ S^k_r$ described above. We have to find
a proper point $y_i$ for each vertex $a_i$ from $A$. Embed the points
$y_{n+1},\ldots, y_{n+s}$ on the complimentary sphere $S$ of $S_r^k$.
Next, choose $y_i, i\le n,$ on the complimentary sphere $S_i$ to the
minimal sphere that contains the
points $y_j,\ j\in H_i$. It is clear that if
the dimensions of $S, S_i$
are at least 1 (they are at least circles), then,
using a standard general position argument, we can find distinct points
$y_i$ that are at unit distance apart precisely from the points  $x_j,
j\in H_i.$ Indeed, in this case we do not need conditions (b), (c)
at all since all the vertices of $A$ have at least a
one-dimensional sphere as a possible position, and condition (d) is satisfied due to the $\eps$-flatness.
It follows that if $d\ge k+3,$ then we can find the desired
realization.

On the other hand, if $d\le k+2$ then it is clear that there is no
faithful distance realization of $G$ in $\R^d$. Indeed, the points
$y_{n+1},y_{n+2}, y_{n+3}$ are in general position, and all the points
of
$X$ lie on the $(d-3)$-dimensional sphere, complimentary to the circle
circumscribed around $y_{n+1},y_{n+2}, y_{n+3}$. But there is no such
realization of $(B, \mk H)$, since $d-3<k.$\\

$\boldsymbol{s=2:}$\ \  This case is similar to the case $s\ge 3$, with the
only difference that we need $S$ to be zero-dimensional. We additionally require $r<1/2$, so that the diameter of $S$ is bigger than 1, and the condition (c) is satisfied. Condition (b) is again redundant, since we may need it only for the vertices $a_1,\ldots, a_n$, and they have an at least one-dimensional sphere as a possible position. As a result,
we need the following inequality: $d\ge k+2$. This bound is tight for
the same reasons.\\

$\boldsymbol{s=0\text{ {\bf or} } 1:}$\ \  We find a realization of $(B,
\mk H)$ on the sphere $S^k_1$.  If $s=1$, then we place $y_{n+1}$ in the
center of the sphere $S^k_1$. The rest of the points $y_i$ are
placed almost as in the case
$s\ge 3$. We have to make sure that no plane $\mathrm{aff}\{x_j:x_j\in
H_i\}$ contains the center of the sphere $S^k_1,$ otherwise there may be
no room for $y_i$. The existence of such realization again follows from
the general position argument.
Then the conditions (c), (d) are satisfied. As for the condition (b), all the points from $X$ lie on the unit sphere $S^k_1$, and any other unit sphere intersects $S^k_1$ in a hypersphere, and since the affine independence conditions are satisfied, we do not get any extra edges between vertices $a_i$ and $b_j$.

One would expect that in this case we can
find a faithful distance realization of $G$ if $d\ge k+1.$ This is not
exactly the case. If $d=k+1$ and for some $i,j,l$ we have $H_i=H_j=H_l$
and $S_i=S_j=S_l$ consists of two points, then we cannot find room
for all of $a_i, a_j, a_l.$

In this case we have to modify slightly the construction of $X$. We
choose $k$ to be a minimum dimension such that there are points
$X=\{x_1,\ldots, x_m\}\subset S^{k}_1$ that satisfy the conditions
from $\mk H$ and are $\eps$-flat. Suppose there exists a configuration $X$ such that for
all triples  $i_1,i_2,i_3$, for which we have $H_{i_1}=H_{i_2}=H_{i_3}$,
the dimension of the complimentary sphere to the sphere circumscribed
around $x_j,\ j\in H_{i_1}$, is at least one. Then this $X$ is the desired construction, and $G$ is realizable as a
faithful distance graph in $\R^{k+1}$. If not, then $G$ is realizable in
$\R^{k+2}$. This is tight for $s=1$. It is unclear whether this is
is tight for $s=0$ or not, since the points of $B$ need not lie on the
sphere.\\

It seems hard to find the  minimum dimension $k$ in which we can
realize $(B, \mk H)$. But, nevertheless, we can use a simple realization
algorithm that provides a relatively good upper bound on $k$, as describe
next. Consider the conditions one by one and modify the set $X$ so that it
satisfies the conditions that were already considered. Next we describe
the realization of  $(B, \mk H)$ on the $k$-dimensional sphere. Note
that if we find a realization on the sphere of some radius, then, using
homothety, we can change the radius to any desired
prescribed positive value.

\begin{itemize}\item In the zero step we take points $x^0_1,\ldots,
x^0_m$ in general position on the circle. No conditions are
considered at this step.
\item In step $l$ we find such $X^l=\{x^l_j,\
j=1,\dots,m\}\subset S^{k_l}$ that satisfies the conditions $\mk
H_l$. In
this step we get one additional condition ($l$-condition).
  We have two possibilities.

 $\mathbf{|H_l|\ge k_{l-1}+1\ \ }$ If $|H_l|\ge k_{l-1}+1,$ then we put
 $k_l = k_{l-1}+1$ and modify the set $X^{l-1} = \{x^{l-1}_1,\ldots,
 x^{l-1}_m\}$ in the following way. Initially,
the first $(k_l-1)$ coordinates of
 $x^l_i$ are just the coordinates of $x_i^{l-1}$, and we put the last coordinate of $x^l_i$ to be equal to $0$.  If $i\in H_l$, then we rotate the point by the angle equal to $f(l,\eps)$ into that new direction, and if $i\notin
 H_l$, then we rotate the point by the same angle into the opposite direction. In that case
 the $l$-condition is satisfied, and if we choose $|f(l,\eps)|$ decreasing rapidly enough, then the $\eps$-flatness condition is also satisfied.

 $\mathbf{|H_l|\le k_{l-1}\ \ }$ Recall that $|H_i|\le |H_j|$ if $i\le
 j$. If $|H_l|\le k_{l-1},$ then $|H_i|\le k_{l-1}$, where $ i\le l.$
 We put $k_l = k_{l-1}$ and find a set of $m$ points in $S^{k_l}$
 in general position that are $\eps/2$-flat. Then the conditions from $\mk H_l$ are satisfied.
\end{itemize}

Using this algorithm we can estimate how many edges the graph $G$
should have so that $(B, \mk H)$ cannot be realized in $S^{k}$.

\begin{lem}\label{lemedge} If $(B,\mk H),\ \mk H = \{H_1,\ldots, H_n\},$
cannot be realized on the sphere $S^{k}$ then $\sum_{i=1}^n|H_i|\ge
{k+3\choose 2}-3.$\end{lem}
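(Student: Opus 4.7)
The plan is to run a slight refinement of the realization algorithm described just above and read off the bound from the sizes of the conditions that force dimension jumps. The key refinement is that a jump is in fact unnecessary at step $l$ when $|H_l|=k_{l-1}+1$: in that case the $|H_l|$ points, placed in sufficiently general position on $S^{k_{l-1}}$, span only a hyperplane of the ambient $\R^{k_{l-1}+1}$, and the remaining $x_i$ ($i\notin H_l$) automatically lie off this hyperplane. Accordingly we ``stay'' whenever $|H_l|\le k_{l-1}+1$, rearranging all $m$ points to a new generic $\eps$-flat configuration on $S^{k_{l-1}}$; by the sortedness assumption $|H_i|\le|H_j|$ for $i\le j$, every previously processed condition also involves at most $k_{l-1}+1$ vertices and so is automatically satisfied. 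A genuine jump (dimension increase by $1$) is thus needed only when $|H_l|\ge k_{l-1}+2$.

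Starting from $k_0=1$, the $j$-th jump in this refined algorithm requires $|H_{l_j}|\ge k_{l_j-1}+2=j+2$, since after $j-1$ previous jumps the current dimension equals $1+(j-1)=j$. The algorithm terminates with a valid realization of $(B,\mk H)$ on $S^{k_n}$ with $k_n=1+|I|$, where $|I|$ is the total number of jumps. If $(B,\mk H)$ cannot be realized on $S^k$, then $k_n\ge k+1$, hence $|I|\ge k$, and summing the sizes at the jump steps only:
\[
\sum_{i=1}^n|H_i|\;\ge\;\sum_{j=1}^{|I|}|H_{l_j}|\;\ge\;\sum_{j=1}^{k}(j+2)\;=\;\frac{k(k+1)}{2}+2k\;=\;\binom{k+3}{2}-3.
\]

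The main technical point is verifying the refined ``stay'' step: one must choose the new generic $\eps$-flat configuration on $S^{k_{l-1}}$ so that it simultaneously satisfies the new $l$-condition and preserves every earlier one. Since each condition is the Zariski-open requirement that certain $x_i$ lie off a specific affine span, and the $\eps$-flat configurations lying close to a fixed hyperplane through the center of $S^{k_{l-1}}$ form a nonempty open subset of configuration space, such a simultaneous choice exists for $\eps$ small enough, by exactly the general-position argument already used in the proof of Theorem \ref{th1} above. Once this is in place, the counting just displayed gives the desired bound.
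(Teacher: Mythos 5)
Your proof is correct, and it rests on the same mechanism as the paper's: run the realization algorithm and observe that each increase of the sphere's dimension from $j$ to $j+1$ can only be forced by a condition of size at least $j+2$, so reaching dimension $k+1$ costs at least $\sum_{j=1}^{k}(j+2)=\binom{k+3}{2}-3$. The difference is organizational: the paper argues by induction on $k$, peeling off the last dimension jump (which contributes $|H_n|\ge k+2$) and applying the inductive hypothesis to the shortest prefix $\mk H_l$ not realizable on $S^{k-1}$, whereas you count all jumps at once; your version is arguably cleaner and yields the paper's Lemma \ref{lemedge2} directly as a byproduct. You also make explicit something the paper leaves implicit: as literally described, the algorithm increases the dimension already when $|H_l|=k_{l-1}+1$, but a condition of that size is automatically satisfied by generic points on $S^{k_{l-1}}$ (its affine hull is at most a hyperplane of $\R^{k_{l-1}+1}$), which is precisely the fact the paper invokes in the step ``otherwise $|H_i|\le k+1$ and the pair can be realized in $S^k$.'' Your refined jump rule is therefore consistent with, and in fact needed for, the stated bound. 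The only ingredient you use without comment is the monotonicity that realizability on $S^{k_n}$ with $k_n\le k$ implies realizability on $S^k$ (embed $S^{k_n}$ as an equatorial subsphere); this is true and is used implicitly throughout the paper as well.
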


\begin{proof} If $(B,\mk H)$ cannot be realized in the sphere
$S^{k}$, then it cannot be realized in $S^{k}$ using the described
algorithm. The proof is by induction. For $k=1$ we need at least one
set $H_i$ to be of cardinality at least three, so the condition is
satisfied. Consider a pair $(B, \mathcal H)$ that cannot be realized
in $S^{k}$ using the described algorithm. Find the minimum $l$, $l<n$,
such that $(B, \mk H_l)$ cannot be realized in $S^{k-1}.$ Such
$l$ exists since at each step of the algorithm we increase the dimension by
at most 1. By induction, $\sum_{i=1}^l|H_i|\ge {k+2\choose 2}-3.$ Then,
surely, $(B, \mk H_l)$ can be realized in $S^{k}.$ Consequently,
$|H_n|\ge k+2$, otherwise $|H_i|\le k+1,\ i=1,\ldots, n,$ and $(B,
\mk H_l)$ can be realized in $S^{k}.$ Then $\sum_{i=1}^n|H_i|\ge
{k+2\choose 2}-3+(k+2)\ge {k+3\choose 2}-3.$
\end{proof}

Modifying the proof slightly, we can get the following generalization:
\begin{lem}\label{lemedge2} Consider a sequence $|H_1|,\ldots,|H_n|$. Choose a subsequence $i_1<\ldots <i_{s}$ of $1,\ldots, n$ of maximal length with the following properties: $|H_{i_j}|\ge j+2, j=1,\ldots,s$ and each $i_j$ is the minimal number that satisfies this property. Then, if $s\le k-1$, $(B,\mk H)$ is realizable in $S^k$.
\end{lem}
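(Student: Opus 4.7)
The plan is to run exactly the greedy algorithm from the proof of Lemma \ref{lemedge} on $(B,\mk H)$ and read off an upper bound of $s$ on the number of dimension increases. The key point is that the thresholds that trigger dimension increases in the algorithm are precisely the ones that define the canonical subsequence $i_1<\cdots<i_s$.

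First, I would run the algorithm and let $\ell_1<\ell_2<\cdots<\ell_t$ be the steps at which the dimension strictly increases; thus $k_{\ell_j}=1+j$. A dimension increase at step $\ell$ is triggered exactly when the $\ell$-condition cannot be enforced in general position on $S^{k_{\ell-1}}$, which is the case precisely when $|H_\ell|\ge k_{\ell-1}+2$. Hence $|H_{\ell_j}|\ge j+2$ for each $j=1,\ldots,t$.

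Next, using the monotonicity $|H_1|\le|H_2|\le\cdots\le|H_n|$ together with the definition of $i_j$ as the smallest index with $|H_{i_j}|\ge j+2$, the inequality $|H_{\ell_j}|\ge j+2$ forces $\ell_j\ge i_j$ for each $j\le t$. The crucial step is to rule out $t\ge s+1$: if $\ell_{s+1}$ existed we would have $|H_{\ell_{s+1}}|\ge s+3$, so by monotonicity $|H_n|\ge s+3$, and then the minimum index $i$ with $|H_i|\ge s+3$ would extend the canonical subsequence to length $s+1$, contradicting its maximality. Therefore $t\le s$, and $k_n=1+t\le 1+s\le k$. Since the algorithm produces a realization of $(B,\mk H)$ on $S^{k_n}$ and $S^{k_n}$ embeds as an equatorial subsphere of $S^k$ preserving all the $l$-conditions and the $\eps$-flatness property, we obtain a realization of $(B,\mk H)$ in $S^k$, as desired.

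The main technical care is needed in the maximality step: one must use the monotonicity of the $|H_i|$ to guarantee the term-by-term comparison $\ell_j\ge i_j$, so that the total count of dimension increases is bounded by $s$ rather than by a weaker degree-sum quantity. Once this comparison is in place, the rest of the argument is a direct refinement of the proof of Lemma \ref{lemedge}.
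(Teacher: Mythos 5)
Your proof is correct and is essentially the ``slight modification'' of the proof of Lemma \ref{lemedge} that the paper intends (no explicit proof of Lemma \ref{lemedge2} is given there): you track the dimension increments of the realization algorithm and observe that, by the monotonicity $|H_i|\le|H_j|$ for $i\le j$, the increment steps are governed by exactly the thresholds $j+2$ defining the canonical subsequence, so the algorithm terminates on a sphere of dimension $1+t\le 1+s\le k$. One point deserves emphasis: the algorithm as literally stated increments the dimension already when $|H_l|\ge k_{l-1}+1$, which would only give thresholds $j+1$ and would prove neither this lemma nor the bound $3+4+\cdots+(k+2)={k+3\choose 2}-3$ of Lemma \ref{lemedge}; your sharper trigger $|H_l|\ge k_{l-1}+2$, justified by the fact that $m$ points in general position on $S^{k_{l-1}}$ automatically satisfy every condition with $|H_i|\le k_{l-1}+1$, is the one the paper implicitly uses in the induction of Lemma \ref{lemedge}, so your reading is the right one. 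A tiny slip in the maximality step: the minimal index $i$ with $|H_i|\ge s+3$ need not exceed $i_s$ and so need not itself extend the canonical subsequence; but $\ell_{s+1}$ does (since $\ell_{s+1}>\ell_s\ge i_s$), and in fact monotonicity gives $\ell_j=i_j$ for every $j$, so $t>s$ is impossible by the very definition of $s$.
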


 Next we return to the bipartite graph $G$. We want to estimate
the number of edges $G$ should have so that $G$ is not realizable as
a faithful distance graph in $\R^d$. We again consider several cases
depending on $s$:\\

$\boldsymbol{s\ge 3:}$\ \ In this case $(B, \mk H)$ cannot be realized
on $S^{d-3}_r,$ so by Lemma \ref{lemedge} we have $\sum_{i=1}^n|H_i|\ge
{d\choose 2}-3.$ Moreover, $|H_{n+1}|=\ldots=|H_{n+s}| = m,$ so
$\sum_{i=n+1}^{n+s}|H_i|=sm\ge 3m.$ But, on the other hand, $m\ge d$,
since otherwise the graph $G$ is realizable in $\R^d$ by part 3 of
Theorem \ref{th1}. So $$\sum_{i=1}^{n+s}|H_i|\ge {d\choose 2}-3 +3d=
{d+3\choose 2}-6.$$\\

$\boldsymbol{s= 2:}$\ \ In this case $(B, \mk H)$ cannot be realized
on $S^{d-2}_r,$ so by Lemma \ref{lemedge}  $\sum_{i=1}^n|H_i|\ge
{d+1\choose 2}-3.$ We have $m\ge d+1$ since otherwise there are $m$
points on $S^{d-2}_r$ forming a simplex, and, consequently, satisfying
the conditions $\mk H$. Similarly to the previous case we obtain
 $$\sum_{i=1}^{n+2}|H_i|\ge {d+1\choose 2}-3 +2(d+1)= {d+3\choose
2}-4.$$\\

$\boldsymbol{s=1:}$\ \ In this case we have two possibilities. Assume
first that for any triple  $i_1,i_2,i_3$, for which we have
$H_{i_1}=H_{i_2}=H_{i_3},$ we also have  $|H_{i_3}|< d$. Then for any such
$i_1, i_2,i_3$ and in any realization of the set $B$ in the space $\R^d$
the dimension of the complimentary sphere to the sphere, circumscribed
around $x_j, \ j\in H_{i_1}$ is at least 1. Consequently, $(B, \mk H)$
cannot be realized on $S^{d-1}_1,$ and  by Lemma \ref{lemedge} we have
$\sum_{i=1}^n|H_i|\ge {d+2\choose 2}-3.$ Similarly to the previous case we
obtain that $m\ge d+2$. So $\sum_{i=1}^{n+1}|H_i|\ge {d+3\choose 2}-3.$

Next we assume that there is a triple  $i_1,i_2,i_3$ for which we have
$H_{i_1}=H_{i_2}=H_{i_3}$ and  $|H_{i_3}|\ge d$. Note that the pair $(B,
\mk H)$ can be realized on $S^{f}_r$ if and only if the pair $(B,\mk
H')$ can be realized on $S^{f}_r$, where $\mk H' = \{H_i,\ i=1,\ldots,
n,\ i\neq i_2,i\neq i_3\}.$ On the other hand, $(B, \mk H)$
cannot be realized on $S^{d-2}_r,$ so $\sum_{i=1}^n|H_i|\ge {d+1\choose
2}-3+2|H_{i_2}|.$ Again, $m\ge d+1$. So $\sum_{i=1}^{n+1}|H_i|\ge
{d+4\choose 2}-8.$\\

$\boldsymbol{s=0:}$\ \ We again have two possibilities. If there
is a triple  $i_1,i_2,i_3$ for which we have $H_{i_1}=H_{i_2}=H_{i_3}$
and  $|H_{i_3}|\ge d$, then we obtain $\sum_{i=1}^n|H_i|\ge {d+1\choose
2}-3+2|H_{i_2}|\ge {d+3\choose 2}-6.$

Suppose that for any triple  $i_1,i_2,i_3$, for which we have
$H_{i_1}=H_{i_2}=H_{i_3},$ we also have  $|H_{i_3}|< d$. If we apply
the previous technique directly, we obtain the bound $\sum_{i=1}^n|H_i|\ge
{d+2\choose 2}-3.$ To get a better bound  we modify the realization
algorithm. Note that in this case we have additional flexibility which
is not taken into account by the algorithm: the algorithm produces
a
set $X$ that lies on the sphere, and we do not need it in this case.

Suppose the set $B$ contains a vertex, say $b_1,$ of degree 3, which
is connected to $a_{i_1}, a_{i_2}, a_{i_3}$. Then we exclude $b_1$ out
of  $B$, and apply the usual algorithm for $(B\backslash\{b_1\},\mk H),$
where $\mk H$ is modified in such a way that element $\{1\}$ is excluded
out of its sets. Suppose this pair can be realized on $S^{d-1}_r,$
where $r$ is sufficiently small. Then we try to choose an appropriate
position for the vertices $y_{i_1}, y_{i_2}, y_{i_3}$ so that $y_{i_1},
y_{i_2}, y_{i_3}$ do not lie on one line and form a triangle with a
radius of a circumscribed circle less than one. We surely can
guarantee that $y_{i_1}, y_{i_2}, y_{i_3}$ are in general position, if
at least one of the spheres $S_{i_j}$ (the geometric place of the point
$y_{i_j}$), $j=1,2,3$ is not zero-dimensional. Suppose all of them are
zero-dimensional. It means that $|H_{i_j}|\ge d,\ j=1,2,3$. We apply Lemma \ref{lemedge2} and obtain that
$\sum_{i=1}^n|H_i|\ge {d+2\choose 2}.$ Indeed, there are different
cases, when some of  $i_j, j=1,2,3,$ fall into the maximal
sequence, and in any case it is clear provided that $d\ge 2$ (note
that the optimal bound given in Lemma \ref{lemedge} can only be obtained when the sequence of $|H_i|$ is a progression $3,4,\ldots, d+1$ and when all $|H_i|$ are present).

 We choose points $y_{i_1}, y_{i_2}, y_{i_3}$ in general position
and such that the plane $\mathrm{aff}\{y_{i_1},y_{i_2},y_{i_3}\}$
does not contain the center $O$ of the sphere that contains all
$x_i$. This is possible since the center of the sphere $S_{i_j}$ coincides with the center of the sphere that contains $x_l, l\in H_{i_j},$ while the plane that contains  $x_l, l\in H_{i_j},$ does not contain $O$. So the centers of the spheres $S_{i_j}$ are different from $O$. Then it is not difficult to prove that we can choose $y_{i_1}, y_{i_2}, y_{i_3}$
so that the radius of the circumscribed circle around them is less than
1. If we view $S^{d-1}_r$ as a point, then
the points on $S_{i_j}$ are just some unit vectors going out
of $S^{d-1}_r.$
We can choose a hyperplane $\pi$ that passes through $S^{d-1}_r$
with the following condition: there are affinely independent points
$y_{i_j}\in S_{i_j}$  that lie at distance $\ge c$ apart from $\pi$
and in the same half-space, where  $c>0$ is an absolute constant. Then,
if we move a point $u$ from the sphere $S^{d-1}_r$ orthogonally to
$\pi$ inside the half-space that contains $y_{i_j}$, at some moment the
distance between $u$ and $y_{i_j}$ will be equal to $1-c',$ for any
$c'\le \sqrt{1-c^2}$. Since we can choose $r$ sufficiently small and $c$
can be chosen independently of $r$, we can find $y_{i_1}, y_{i_2},
y_{i_3}$ with the desired properties.

Next we just choose the point $x_1$ in $\R^d$ in such a way
that $|y_{i_j}x|=1,\ j=1,2,3$ and that the sphere of radius 1
with center in $x_1$ does not contain $x_2,\ldots, x_m$ and $S_i, i\neq i_1,i_2,i_3.$
After that we choose appropriate points $y_i, i\neq i_1,i_2,i_3$.
This
means that $G$ is realizable in $\R^d$, a contradiction. Thus, the
pair $(B\backslash\{b_1\},\mk H)$ is not realizable in $\R^d$, and
$\sum_{i=1}^n|H_i|\ge {d+2\choose 2}-3+\deg(b_1)={d+2\choose 2}.$\\

Suppose next that the set $B$ does not contain vertices of degree 3,
but contains a vertex, say $b_1,$ of degree 4, which is connected to
$a_{i_1}, a_{i_2}, a_{i_3}, a_{i_4}$. We argue as in the previous case,
and try to find appropriate $y_{i_1}, y_{i_2}, y_{i_3}, y_{i_4}$ that
are in general position. If there are no such $y_{i_j}$, then we have
two possible reasons for that. The first is that three out of
the spheres $S_{i_j}$
are zero-dimensional, and then we can
interchange the roles of the parts $A$ and
$B$. In this case we get three sets of equal size,
and can conclude as above that $\sum_{i=1}^n|H_i|\ge {d+2\choose
2}.$ The second is that none of the spheres
$S_{i_j}$ are two dimensional, so
we have at least $s$ one-dimensional spheres and $(4-s)$  zero-dimensional spheres, where $s\ge 2$. In any case,
applying Lemma \ref{lemedge2}, we get
that $\sum_{i=1}^n|H_i|\ge {d+2\choose 2}.$

If there are $y_{i_1}, y_{i_2}, y_{i_3}, y_{i_4}$ in general position,
then again one can show that they can be chosen in such a way that the
radius of the circumscribed sphere around them is less than 1, and the
reasoning goes as for the case of a vertex of degree 3. Finally, we get
the estimate $\sum_{i=1}^n|H_i|\ge {d+2\choose 2}-3+\deg(b_1)={d+2\choose
2}+1.$\\

Suppose now that the smallest degree of a vertex in $B$ is 5. Then
we interchange the roles of parts $A$ and $B$, form a set system $\mk
H^B=\{H_1^B,\ldots, H_m^B\}$ analogous to the way we formed the set system
$\mk H$ and apply Lemma \ref{lemedge2}. As the first two elements of the increasing sequence we get
$|H_1^B|,|H_2^B|\ge 5$ instead of $|H_1^B|=3, |H_2^B|=4$, and
we finally get $\sum_{i=1}^n|H_i|=\sum_{i=1}^m|H^B_i|\ge {d+2\choose 2}.$
This completes the proof of Theorem \ref{bipart}. \qquad\qquad\qquad\qquad\qquad\qquad\qquad\qquad\qquad\qquad$\Box$

\subsection{Proof of Proposition \ref{thram}}

Having the statement of Theorem \ref{th2}, the
proof of both parts is merely a slight modification of the proof of the
lower bound for the classical Ramsey number. Indeed, by
a simple probabilistic
argument one can show that if ${m\choose s}2^{1-{s\choose
2}}|\mathcal{FD}_s(d)|<1,$ then $R_{FD}(s,s,d)>m.$

For the proof of part 1 of  Theorem \ref{thram} we use part 2 of
Theorem \ref{th2}, and obtain the inequality $m^s2^{-(1+ o(1))s^2/2}<
1,$ which holds for $m=2^{(1+o(1))s/2}$.
For the proof of part 2 of  Theorem \ref{thram} we use part
3 of Theorem \ref{th2}, and obtain the inequality $m^s2^{-(1/2-c'+
o(1))s^2}< 1.$ Thus, we can choose $\alpha = 1/2-c'$.

\subsection{Proof of Proposition \ref{prgir}}\label{sec25}

We use the following theorem from \cite{Ach}:
\begin{thm}[D. Achlioptas, C. Moore, \cite{Ach}]
\label{achl}
Given any integer $l\ge 3,$ let $k$ be the
smallest integer such that $l\le 2k\log k.$ Then with high
probability the chromatic
number of the random $l$-regular graph is $k,k+1$ or $k+2$.
\end{thm}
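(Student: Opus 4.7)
The plan is to combine a random regular graph construction with the R\"odl--Maehara embedding theorem (Theorem \ref{thRM}), which asserts that every graph of maximum degree $l$ is realizable as a faithful distance graph in $\R^{2l}$. The idea is to produce, for each target dimension $d=2l$, a graph of maximum degree $\le l$ with girth $>g$ and chromatic number $\Omega(l/\log l)$, and then embed it via R\"odl--Maehara.

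Fix $g\in\N$. For each integer $l\ge l_0(g)$, I would take $n=n(l)$ sufficiently large (concretely, $n\ge l^{g+1}$) and let $G$ be the uniform random $l$-regular graph on $n$ labelled vertices. Two properties of $G$, each holding with high probability as $n\to\infty$, will be used. First, by Theorem \ref{achl}, letting $k$ be the smallest integer with $l\le 2k\log k$ so that $k=(1-o_l(1))\,l/(2\log l)$, we have $\chi(G)\ge k$ whp, and hence $\alpha(G)\le n/\chi(G)\le (1+o_l(1))\,2n(\log l)/l$. Second, it is classical that the number of cycles of length $t$ in the random $l$-regular graph is asymptotically Poisson with mean $(l-1)^t/(2t)$, so the expected number of cycles of length at most $g$ is $\sum_{t=3}^{g}(l-1)^t/(2t)=O_g(l^g)=o(n)$; by Markov, $G$ has at most $o(n)$ such cycles whp.

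I would then delete one vertex from each cycle of length $\le g$, producing a subgraph $G'$ of girth greater than $g$, maximum degree at most $l$, and $|V(G')|\ge n(1-o(1))$. Since $\alpha(G')\le\alpha(G)$,
$$
\chi(G')\;\ge\;\frac{|V(G')|}{\alpha(G')}\;\ge\;(1-o_l(1))\,\frac{l}{2\log l}.
$$
Applying Theorem \ref{thRM} to $G'$ yields a faithful distance realization in $\R^{2l}$. Setting $d=2l$ gives, for every sufficiently large even $d$, a faithful distance graph in $\R^d$ of girth $>g$ and chromatic number $\Omega_g(d/\log d)$. Odd dimensions are obtained by the trivial coordinate embedding $\R^{d-1}\hookrightarrow\R^d$.

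The main delicacy is coordinating the two high-probability statements for one and the same random graph $G$: Theorem \ref{achl} and the Poisson statistics of short cycles are both classically formulated for fixed $l$ with $n\to\infty$, so I would fix $l$ (equivalently $d=2l$) and choose $n$ large enough (in particular $n\gg l^g$) to make both error probabilities simultaneously negligible. A secondary concern is that the deletions in the short-cycle clean-up step could in principle degrade the chromatic bound, but since only $o(n)$ vertices are removed while $\alpha(G)$ is linear in $n$, the ratio $|V|/\alpha$, and hence $\chi$, is preserved up to a factor $1-o(1)$; this is the only place where we rely on the lower bound on $\chi(G)$ being matched by an upper bound on $\alpha(G)$ of the same order.
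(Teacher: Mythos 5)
You have not proved the statement in question at all. The statement is Theorem \ref{achl} itself --- the Achlioptas--Moore result that for a random $l$-regular graph the chromatic number is concentrated on the three values $k,k+1,k+2$, where $k$ is the least integer with $l\le 2k\log k$. Your argument instead \emph{assumes} this theorem (``First, by Theorem \ref{achl}, \dots we have $\chi(G)\ge k$ whp'') and uses it, together with Theorem \ref{md} and the R\"odl--Maehara embedding (Theorem \ref{thRM}), to establish a different claim, namely Proposition \ref{prgir} about faithful distance graphs of large girth and chromatic number $\Omega_g(d/\log d)$. Relative to the assigned statement this is circular: the one thing you were asked to justify is taken as a black box, and everything you actually argue is downstream of it.

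For the record, the paper does not prove Theorem \ref{achl} either; it is imported verbatim from Achlioptas and Moore \cite{Ach}, whose proof is a substantial piece of work on random regular graphs (second-moment and concentration arguments for colourings of the configuration model), none of which appears in your proposal. What you have written is essentially the paper's own proof of Proposition \ref{prgir} (with a minor variation: you control $\chi$ after deleting short-cycle vertices via the independence-number bound $\chi\ge |V|/\alpha$, whereas the paper simply takes an $l$-regular graph that already has girth $>g$ with probability bounded away from zero by Theorem \ref{md}). That part is fine as far as it goes, but it answers the wrong question.
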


We also need a theorem from \cite{Mc}:

\begin{thm}[B.D. McKay, N.C. Wormald, B. Wysocka, \cite{Mc}]
\label{md}
For $(l-1)^{2g-1} =o(n)$, the probability that a
random  $l$-regular graph has girth greater than $g$ is
$$
\exp\left(-\sum_{r=3}^g\frac{(l-1)^r}{2r}+o(1)\right)
$$
\end{thm}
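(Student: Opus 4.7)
The plan is to work in the configuration model of random $l$-regular graphs. Let $Y_r$ denote the number of $r$-cycles in a uniformly random perfect matching of $ln$ half-edges (with each of the $n$ vertices contributing $l$ half-edges), where $Y_1$ counts loops and $Y_2$ counts double edges. The underlying multigraph has girth greater than $g$ if and only if $Y_1 = Y_2 = \cdots = Y_g = 0$. The key claim is that $(Y_1,\ldots,Y_g)$ converges jointly in distribution to a vector of independent Poisson variables with means $\lambda_r = (l-1)^r/(2r)$ (with the appropriate adjustments for $r=1,2$). Since a uniformly random simple $l$-regular graph is obtained from the configuration model by conditioning on $Y_1 = Y_2 = 0$, the probability of girth exceeding $g$ in the simple-graph model then equals $\Pr[Y_3 = \cdots = Y_g = 0]$ in the limit, and by independence this is exactly $\exp\bigl(-\sum_{r=3}^g (l-1)^r/(2r) + o(1)\bigr)$.

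To establish the Poisson convergence I would apply the method of factorial moments. First compute $\mathbb{E}[Y_r]$ directly: enumerating cyclic sequences of $r$ distinct vertices together with a choice of two distinguished half-edges at each, the probability that the matching pairs them into the desired $r$-cycle is $\prod_{j=0}^{r-1}(ln-2j-1)^{-1}$, and summation gives
$$\mathbb{E}[Y_r] \;=\; \frac{(n)_r\,\bigl(l(l-1)\bigr)^r}{2r\,\prod_{j=0}^{r-1}(ln-2j-1)} \;=\; \frac{(l-1)^r}{2r}\Bigl(1+O\bigl((l-1)^{2r}/n\bigr)\Bigr),$$
which is $\lambda_r + o(1)$ for each $r \le g$ under the hypothesis $(l-1)^{2g-1} = o(n)$. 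For joint factorial moments $\mathbb{E}\bigl[(Y_{r_1})_{j_1}\cdots(Y_{r_k})_{j_k}\bigr]$ the main contribution comes from tuples of pairwise vertex-disjoint cycles and equals $\prod_i \lambda_{r_i}^{j_i}(1+o(1))$; tuples containing any vertex- or edge-overlap contribute a factor of at most $O\bigl((l-1)^{2g-1}/n\bigr)$ and are therefore negligible. By the standard Poisson method-of-moments criterion, the cycle counts are asymptotically independent Poissons with the claimed means.

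The main obstacle is maintaining uniform control of the error terms when $l$ is allowed to grow with $n$. For fixed $l$ the entire calculation is classical, but once $l\to\infty$ one must carefully bookkeep how overlap corrections scale: the exponent $2g-1$ in the hypothesis is sharp in the sense that the worst overlap pattern between two cycles of length at most $g$ contributes a factor $(l-1)^{2g-1}/n$, which is precisely what the hypothesis forces to $0$. Once this uniform bound on factorial moments is in hand, the Poisson limit gives $\Pr[Y_1 = \cdots = Y_g = 0] \to \exp\bigl(-\sum_{r=1}^g \lambda_r\bigr)$ in the configuration model, and dividing by $\Pr[Y_1 = Y_2 = 0] \to \exp(-\lambda_1 - \lambda_2)$ (the probability of obtaining a simple graph) yields exactly the stated formula for the uniform random $l$-regular graph.
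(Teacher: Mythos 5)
The paper does not prove Theorem \ref{md} at all: it is quoted from \cite{Mc} and used as a black box in the proof of Proposition \ref{prgir}, so there is no internal argument to compare yours against; what follows is an assessment of your sketch on its own merits.

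Your outline is the classical Bollob\'as--Wormald computation in the pairing model, and for \emph{fixed} $l$ and $g$ it is correct: the first-moment formula is right, vertex-disjoint tuples dominate the joint factorial moments, the cycle counts $Y_1,\dots,Y_g$ are asymptotically independent Poisson, and conditioning on simplicity ($Y_1=Y_2=0$) yields the stated formula. Since Proposition \ref{prgir} only invokes Theorem \ref{md} for fixed $l$ and $g$, this version would suffice for everything the paper needs. The genuine gap concerns the theorem as actually stated, whose hypothesis $(l-1)^{2g-1}=o(n)$ is there precisely to allow $l\to\infty$ with $n$. In that regime every mean $\lambda_r=(l-1)^r/(2r)$ diverges, so there is no Poisson limit in distribution and ``the standard Poisson method-of-moments criterion'' does not apply: convergence of factorial moments pins down $\Pr[Y_1=\dots=Y_g=0]$ only when the limiting probability is bounded away from $0$, whereas here both $\Pr[Y_1=\dots=Y_g=0]$ and the normalizing probability of simplicity $\Pr[Y_1=Y_2=0]$ tend to $0$, and their ratio is exactly the quantity you need. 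To extract $e^{-\sum_r\lambda_r}$ from Bonferroni-type truncations one must push the moment order well beyond $\max_r\lambda_r$, at which point the per-pair overlap correction $O\bigl((l-1)^{2g-1}/n\bigr)$ accumulates over order $\lambda^2$ pairs of cycles and is no longer negligible. This is the difficulty that \cite{Mc} was written to overcome, and they do so by a switching argument (removing short cycles one at a time and counting switchings in both directions) rather than by the method of moments. Your closing paragraph names the obstacle, but the phrase ``once this uniform bound on factorial moments is in hand'' assumes away the step that in fact requires a different technique.
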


By Theorem \ref{md} we get that for any fixed $l,g\in \N$ the random
$l$-regular graph has girth $\ge g$ with probability bounded away
from $0$. Thus, by
Theorem \ref{achl}, a random $l$-regular graph
satisfies, with positive probability, the condition on the chromatic
number from Theorem \ref{achl} and also has girth greater than $g$. Consider
such a graph $G$ with $l=[d/2].$ Then $\chi(G)=\frac d{4\log d}(1+
o(1)).$
Finally, we use the following theorem from \cite{RM}:
\begin{thm}[H. Maehara, V. R\"odl, \cite{RM}]
\label{thRM}
Any graph with maximum degree $k$ can be realized as a faithful distance
graph in $\R^{2k}$.
\end{thm}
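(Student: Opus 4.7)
My plan is to prove Theorem \ref{thRM} by constructing a faithful distance embedding $\phi\colon V(G)\to\R^{2k}$ one vertex at a time, in the spirit of the proof of part 3 of Theorem \ref{th1}. The case $k=1$ (when $G$ is a matching, trivially realizable in $\R^2$) being easy, I assume $k\ge 2$.

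Fix any $r\in(1/\sqrt 2, 1)$ and let $S\subset \R^{2k}$ be the sphere of radius $r$ about the origin. I place all vertices on $S$, which automatically ensures that any subset of at most $k$ placed vertices has circumradius at most $r<1$ -- this is the geometric condition that will make the unit-distance spheres around them intersect nontrivially. Order the vertices $v_1,\dots,v_n$ arbitrarily and add them one at a time.

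Inductive step (placing $v_i$): Let $u_1,\dots,u_m$ (with $m\le k$) be the already-placed neighbors of $v_i$. Parameterize $S=\{re:|e|=1\}$; then the unit-distance condition $|x-\phi(u_j)|=1$ translates to the linear equation $\langle f, f_j\rangle = 1-\tfrac{1}{2r^2}=:\alpha$ on the unit vector $f=x/r$, where $f_j=\phi(u_j)/r$. Hence
\[
T_i = \{x\in S:|x-\phi(u_j)|=1 \text{ for all } j=1,\dots,m\}
\]
is the intersection of $m$ affine hyperplanes (in the $f$ coordinates) with the unit sphere in $\R^{2k}$, i.e. a sphere of dimension $(2k-1)-m\ge k-1\ge 1$. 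We place $\phi(v_i)$ on $T_i$ generically, avoiding the measure-zero ``bad'' subset of points that (a) lie at unit distance from some already-placed non-neighbor, (b) coincide with a previously placed vertex, or (c) violate the general-position assumptions needed later. Each bad set is a proper algebraic subvariety of $T_i$, so a generic choice suffices.

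The main technical obstacle is showing that $T_i$ is actually non-empty, i.e.\ that the affine system $\langle f, f_j\rangle=\alpha$ is consistent with $|f|=1$. A direct Gram-matrix computation, most delicate when the neighbors form a clique of size $k$ (so the Gram matrix of the $f_j$ equals $(1-\alpha)I+\alpha\mathbf{1}\mathbf{1}^T$), shows that the minimum-norm solution of the affine system has squared norm
\[
\alpha^2\,\mathbf{1}^T G^{-1}\mathbf{1} \;=\; \frac{\alpha^2 k}{1+(k-1)\alpha},
\]
which is strictly less than $1$ for all $\alpha\in(0,1)$, and in particular for $\alpha=1-1/(2r^2)\in(0,1/2)$. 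Thus the affine subspace meets the unit sphere in a sphere of the claimed positive dimension. Combined with the standard measure-zero avoidance arguments for the bad set, this closes the induction and yields the theorem.
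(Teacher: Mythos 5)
The paper does not actually prove Theorem \ref{thRM} --- it is quoted from Maehara--R\"odl \cite{RM} --- so there is no internal proof to compare against; I will assess your argument on its own terms. Your overall strategy (vertices on a common sphere, added one at a time, each new vertex placed generically on the locus of points at unit distance from its already-placed neighbours) is the right one, but there is a genuine gap at the step you yourself flag as ``the main technical obstacle'': the non-emptiness of $T_i$. You verify $\alpha^2\,\mathbf{1}^T G^{-1}\mathbf{1}<1$ only when the Gram matrix $G$ of the placed neighbours is the clique matrix $(1-\alpha)I+\alpha\mathbf{1}\mathbf{1}^T$, and you assert without proof that this is the worst case. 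It is not: the entries $\langle f_j,f_l\rangle$ for \emph{non-adjacent} placed neighbours $u_j,u_l$ are unconstrained, and if two such neighbours were placed nearly antipodally then $G$ is nearly singular with $\mathbf{1}^TG^{-1}\mathbf{1}\to\infty$, so the affine subspace misses the unit ball and $T_i=\emptyset$. This failure is reachable under your scheme: for the path $u_1$--$w$--$u_2$ (so $k=2$), your algorithm may legitimately place $u_2$ at or near the antipode of $u_1$ (that position lies in none of your measure-zero bad sets), after which $w$ cannot be placed at all, since $\langle f_w,f_1\rangle=\langle f_w,-f_1\rangle=\alpha>0$ is inconsistent. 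Avoiding this is an open condition, not a measure-zero one, so your clause ``(c) violate the general-position assumptions needed later'' cannot absorb it.

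The clean repair is to take $r=1/\sqrt2$, i.e.\ $\alpha=0$: then unit distance between points of $S$ is exactly orthogonality of the position vectors, the constraint set for $v_i$ is the intersection of $S$ with a \emph{linear} subspace of codimension at most $k$ through the centre, and this is automatically a non-empty sphere of dimension at least $k-1\ge1$ regardless of the Gram matrix, after which your generic-avoidance argument goes through verbatim. (This is the same device as in the paper's proof of part 1 of Theorem \ref{th1}, and essentially the Maehara--R\"odl construction.) Alternatively one can keep $\alpha>0$ but maintain throughout the induction a quantitative invariant forcing all placed points into a small spherical cap, in the spirit of the $\eps$-flatness condition used in the paper's proofs of part 3 of Theorem \ref{th1} and of Theorem \ref{bipart}; but some such global invariant must be stated and propagated --- pointwise genericity alone does not suffice.
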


Applying Theorem \ref{thRM} to the graph $G$,
we obtain the statement of Proposition~\ref{prgir}.

\section{Additional Problems}
Problem \ref{pr1} seems to be quite challenging, and is probably
the most interesting question among the ones stated in this paper.

Theorem \ref{bipart} supplies relatively tight bounds on $g_2(d)$, but
it will be interesting to determine the exact value. We believe that the
graph that provides the upper bound in Theorem \ref{bipart} is optimal.

More generally, we suggest the following problem:
\begin{prob}
Determine the
minimum possible number $g_k(d)$ of edges of a $k$-colorable
graph $G$ which is
not realizable as a faithful distance graph in $\R^d$.
\end{prob}

It seems interesting to find any non-trivial examples of graphs
that are not faithful distance graphs in $\R^d$
and  have a small number of
edges. We do not know any example except for bipartite graphs similar to
the one that gives the upper bound in Theorem \ref{bipart}. Is there
any non-trivial example whose number of edges is between that of
$K_{d+2}$ and this upper bound ?

Recall the distance Ramsey numbers discussed in
Section 1. Can we determine
the minimum $f_D=f_D(s)$, such that $R_D(s,s,f_D)=s?$ In other
words, $f_D(s)$ is the smallest possible $d$, such that for any graph $G$
on $s$ vertices either $G$ or its complement $\bar G$ can
be realized as a distance
graph in $\R^d$.

We can show that $f_D(s)=(\frac{1}{2}+o(1)) s$.
The lower bound follows by considering the graph $G$ which is a
clique on $\lceil s/2 \rceil$ vertices (and $\lfloor s/2 \rfloor$
isolated ones.) The upper bound follows from the fact
(proved by an iterative application of the classical Ramsey
theorem) that
the vertices of any graph $G$ on $s$ vertices can be partitioned into
$O(s/ \log s)=o(s)$ pairwise disjoint sets, each spanning either a
clique or an independent set of $G$. This implies that either $G$
or $\bar G$ can be colored properly by $(\frac{1}{2}+o(1))s$ colors
so that at least $s/2$ color classes are of size $1$. The argument
in the proof of Theorem \ref{th1}, part 1 can be easily modified
to show that any graph that has a proper coloring with $a$ color
classes of size $1$ and $b$ bigger color classes can be realized as
a distance graph in $\R^{a+2b}$, implying the desired upper bound.

A similar question can be asked
for the function $f_{FD}(s)$ whose definition is obtained from that
of  $f_D$ by replacing distance Ramsey numbers by faithful distance
Ramsey numbers. It seems harder to determine
the asymptotic behaviour of $f_{FD}(s)$.
We suggest the following

\begin{prob} Determine $f_D(s)$ and $f_{FD}(s)$.
\end{prob}



\end{document}